\documentclass{amsart}
\usepackage{amsmath,amssymb,amsthm}
\pdfoutput=1
\usepackage[
    pdftitle={On commutative weak BCK-algebras},
    pdfauthor={J. C\={\i}rulis}
]{hyperref}

\numberwithin{equation}{section}

\theoremstyle{plain}
\newtheorem{theo}{Theorem} [section]
\newtheorem{prop}[theo]{Proposition}
\newtheorem{lemm}[theo]{Lemma}
\newtheorem{coro}[theo]{Corollary}

\theoremstyle{definition}
\newtheorem{exam}{Example}
\newtheorem{defi}[theo]{Definition} 

\newcommand\colo{\colon\,}

\newcommand\Iff\Leftrightarrow

\newcommand\sminus\smallsetminus

\newcommand{\Nv}{\mathbf N^1_5}
\newcommand{\Nw}{\mathbf N^2_5}

\newcommand{\Ov}{\mathbf {OM}_6}

\renewcommand\phi\varphi
\newcommand\sqsub\sqsubset
\newcommand\di{{:}}
\newcommand{\tri}{{\therefore}}

\newcommand\thitem[1]{\item[\hspace*{-3ex}{\upshape (#1)}]}

\newenvironment{myenum}{\begin{enumerate}

        \setlength{\itemsep}{0pt}}
        {\end{enumerate}}

\newcounter{minus}\setcounter{minus}{0}
\newenvironment{minusenum}{\begin{enumerate}
        \setlength{\itemindent}{1.94em}
        \setcounter{enumi}{\value{minus}}

        \setlength{\itemsep}{0pt}}
{\setcounter{minus}{\value{enumi}}\end{enumerate}}
\newcommand{\rminus}[1]{($-_{\ref{#1}}$)}

\hyphenation{com-ple-ment com-ple-men-ted com-ple-men-ta-tion}

\begin{document}
\setcounter{page}{1}

\title{On commutative  weak BCK-algebras}
\author{J\=anis C\={\i}rulis} 

\newcommand{\acr}{\newline\indent}
\address{Faculty of Computing\acr
    University of Latvia\acr
    Rai\c{n}a b., 19\acr
    Riga LV-1586\acr
    LATVIA}
\email{jc@lanet.lv}

\thanks{This work was supported by ESF project
No.\ 2009/0216/1DP1.1.1.2.0/09/APIA/VIAA /044}

\subjclass[2010]{Primary 06F35; Secondary 03G25, 06B75, 06C15}
\keywords{commutative, De Morgan complementation, Galois complementation, implicative, nearlattice, orthocomplementation, orthoimplicative, positive implicative, quasi-BCK-algebra, sectionally orthocomplemented, sectionally orthomodular, sectionally semicomplemented, weak BCK-algebra}

\begin{abstract}
The class of weak BCK-algebras can be obtained by replacing (in the standard axiom set by K.~Iseki and S.~Tanaka) the first BCK axiom $(x - y) - (x - z) \le z - y$ by its weakening $z \le y \Rightarrow x - y \le x - z$.
It is known that every weak BCK-algebra is completely determined by the structure of its initial segments. We review several natural classes of commutative weak BCK-algebras, prove that they are equationally definable, and show that the order duals of a multitude of algebras with implication known in the literature in connection with various quantum logics are, in fact, commutative weak BCK-algebras belonging to that or other of these classes.
We also characterize initial segments of algebras in each of the classes as lattices equipped with a suitable kind of complementation. In particular, commutative weak BCK-algebras are just those meet semilattices with the least element in which all initial segments are non-distributive De Morgan lattices.
\end{abstract}

\maketitle

\section{Introduction}   \label{intro}

Like BCK-algebras \cite{bck1,bck2}, a weak BCK-algebra is a poset having the bottom element and equipped with a binary operation considered as subtraction. Both BCK-algebras and weak BCK-algebras have also been treated in the dual form as algebras with the reversed ordering, the top element and an operation considered as implication. We hold here the former viewpoint, and call algebras presented in the latter form BCK*-algebras, resp., weak BCK*-algebras.

The class of weak BCK*-algebras was introduced by the author in the talk on the 44-th Summer School on General Algebra and Ordered Sets (Rad\u{e}jov, Czech Republic, September 2006) as a tool for algebraization of posets with terminal sections equipped with certain weak complementations. They were studied more extensively in \cite{wbck1} and latter, already as algebras with subtraction, in \cite{wbck2,wbck3}. In contrast to these papers, the present one is devoted just to weak BCK-algebras. A representation theorem for a subclass of weak BCK*-algebras, which relates it to substructural logic, was proved in \cite{qbck}; see subsection \ref{prelim}.2 below.

\begin{defi}    \label{wBCK:def}
A \emph{weak BCK-algebra} (\emph{wBCK-algebra}, for short) is an algebra $(A,-,0)$, where $A$ is a poset with $0$ the least element, and $-$ is a binary operation on $A$ (called \emph{subtraction}) satisfying the axioms
\begin{minusenum}
\item $x \le y$ if and only if $x - y = 0$,    \label{salg}
\item if $x - y \le z$, then $x - z \le y$. \label{sgalois}
\end{minusenum}
\end{defi}

For instance, every poset with 0 carries a \emph{discrete wBCK-algebra}, where
\[
\mbox{$x - y = 0$ if $x \le y$, and $x - y = x$ otherwise}.
\]

The operation $-$ induces on every initial segment (or section) $[0,p]$ of a wBCK-algebra $A$ a unary operation $^+_p$ defined by $x^+_p := p - x$, which can be considered as a kind of complementation on $[0,p]$, named in \cite{wbck2} a g*-complementation. It is shown there that every wBCK-algebra is completely determined by the structure of its sections; even more, there is bijective connection between wBCK-algebras and certain sectionally g*-complemented posets with zero (see below Section \ref{struct} for details). Also, connections between several classes of wBCK-algebras and properties of respective sectional g*-complementations were found out in \cite{wbck2}; see also \cite{amst}.

In the present paper we continue this line of investigation, but deal mainly with the narrower class of commutative wBCK-algebras (i.e., those fulfilling the identity $x - (x - y) = y - (y - x)$ and a few of its subclasses. Sections \ref{prelim} and  \ref{commw} review some basic facts concerning general wBCK-algebras and, respectively, commutative wBCK-algebras. In particular, like BCK-algebras, every commutative wBCK-algebra is a meet semilattice, and all these algebras form a variety. Probably, the most familiar subclass of commutative BCK-algebras is that of implicative BCK-algebras; their order duals  are in fact equivalent (see, e.g., \cite{Jie}) to implication, or semi-boolean, algebras introduced earlier in \cite{Abb1a,Abb1b}. Weak BCK-algebras admit several reasonable generalizations of implicative BCK-algebras.
In Section \ref{oimpl}, isolated is a subclass of  \emph{orthoimplicative wBCK-algebras}, which satisfy the condition $x- y \le y \Rightarrow x \le y$. This class also is a variety. Further, in Section 5, orthoimplicative wBCK-algebras satisfying the condition $x \le y \le z \Rightarrow y - z \le x - z$, called \emph{implicative wBCK-algebras}, are shown to form another variety. Every orthoimplicative BCK-algebra is an implicative BCK algebra, and every implicative BCK-algebra is an implicative weak BCK-algebra.

We also disclose in Sections 3--5 that a number of algebras with implication known in the literature 
(see \cite{Abb2,Ch4,Ch2,Ch5,ChH1,ChH2,ChHK2,ChHK3,ChHL1,ChHL2,ChKS,ChL,MP})
 are, in fact, commutative wBCK*-algebras of that or other type.

At last, in Section \ref{struct}, we characterize the commutative, orthoimplicative and implicative wBCK-algebras in terms of sectional g*-complementations $^+_p$. In particular, initial sections in such algebras are non-distributive De Morgan lattices, ortholattices and orthomodular lattices, respectively. (Recall that the sections in implicative BCK-algebras are Boolean lattices.) Some results obtained in this section have been announced in \cite{amst}. Similar structure theorems for several algebras with implication are obtained also in the papers cited in the preceding paragraph; due to the unifying framework of wBCK-algebras, our proofs are more straightforward and simpler.

\cite{stlog} is a recent shortened version of this paper, which includes its Introduction, subsection 2.1, and reorganized Section \ref{struct} (with a few new results).

\section{Weak BCK-algebras} \label{prelim}

\subsection{Preliminaries} \label{prelim1}

To reduce uses of parentheses and thus make the structure of expressions more transparent, we follow \cite{wbck2} and use dots for grouping. Our convention for reading expressions is as follows: a left (right) delimiter in an expression is either a left (resp., right) round bracket, or a dot or group of dots which precedes (resp., follows) a term in this expression, or the beginning (resp., end) of the expression, and the intended scope of a binary operation is determined by the nearest left and the nearest right delimiter.
(However, a dot (group of dots) may delimit a scope only if the number of dots in this delimiter exceeds the number of not bracketed occurrences of operations in the corresponding wing of the scope.)
For instance, both expressions
\begin{gather*} x - y. - .z - x\di - .(x - \di y - .z -x) - y, \\
 (x - y. - .z - x) - (x - \di y - .z -x\tri - y)
 \end{gather*}
are condensed versions of \( ((x - y) - (z - x)) - ((x - (y - (z - x))) - y) \).

The subsequent list of basic properties of subtraction is borrowed from \cite{wbck1,wbck2}.

 \begin{prop} In any wBCK-algebra,   \label{wBCK-prop}
 \begin{minusenum}
\item $x - .x - y \le y$, \label{mdi2}
\item if $x \le y$, then $z - y \le z - x$, \label{manti}
\item $x - \di x - .x - y = x - y$, \label{mtri}
\item $x - x = 0$, \label{xx0}
\item $x - y \le x$, \label{xyx}
\item $x - .x - y \le x$, \label{mdi1}
\item $x - 0 = x$, \label{x0x}
\item $0 - x = 0\,$. \label{0x0}
\end{minusenum}
\end{prop}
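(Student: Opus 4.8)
The plan is to derive the eight items from the two axioms \rminus{salg} and \rminus{sgalois} in a suitable dependency order, so that each item is already available when the next one needs it. Four of them are essentially immediate. The items \rminus{xx0} and \rminus{0x0} follow at once from \rminus{salg} applied to the valid inequalities $x \le x$ and $0 \le x$. For \rminus{xyx} I would read \rminus{sgalois} as the schema ``if $a - b \le c$, then $a - c \le b$'' and instantiate it with $a := x$, $b := x$, $c := y$: since $x - x = 0 \le y$ by \rminus{xx0}, this gives $x - y \le x$. Item \rminus{mdi1} is then just the instance of \rminus{xyx} obtained by replacing $y$ with $x - y$. Likewise, \rminus{mdi2} comes from the same reading of \rminus{sgalois} with $a := x$, $b := y$, $c := x - y$, applied to the trivial inequality $x - y \le x - y$.

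The remaining three items build on \rminus{mdi2}. For \rminus{x0x}, note that $x - 0 \le x$ by \rminus{xyx}, while \rminus{mdi2} with $y := 0$ gives $x - (x - 0) \le 0$, hence $x - (x - 0) = 0$ since $0$ is the least element; by \rminus{salg} this means $x \le x - 0$, and antisymmetry closes the case. For \rminus{manti}, assume $x \le y$; by \rminus{mdi2} and transitivity $z - (z - x) \le x \le y$, and feeding this into \rminus{sgalois} (with $a := z$, $b := z - x$, $c := y$) yields $z - y \le z - x$. Finally, for \rminus{mtri}, one inequality is the instance of \rminus{mdi2} with $y$ replaced by $x - y$, namely $x - (x - (x - y)) \le x - y$; for the reverse, \rminus{mdi2} gives $x - (x - y) \le y$, and then \rminus{manti}, applied to this inequality with the outer term $x - (\,\cdot\,)$, gives $x - y \le x - (x - (x - y))$, so antisymmetry again yields the desired equality.

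I do not expect a genuine obstacle here: every step is a one-line consequence of an axiom or of an already-proved item, so the only things that really need care are (i) keeping the derivation order consistent with the dependencies --- \rminus{mdi1} after \rminus{xyx}; \rminus{x0x}, \rminus{manti}, \rminus{mtri} after \rminus{mdi2}; and \rminus{mtri} also after \rminus{manti} --- and (ii) correctly renaming the bound variables each time \rminus{sgalois} is invoked, since the substituted terms are themselves subtraction terms. For this reason I would state \rminus{sgalois} once in the generic form ``$a - b \le c$ implies $a - c \le b$'' and thereafter quote it by specifying $a$, $b$, $c$ explicitly, as above.
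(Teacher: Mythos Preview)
Your derivations are all correct; each application of \rminus{salg} and \rminus{sgalois} is instantiated properly and the dependency order you set up is consistent. The paper itself does not prove this proposition at all---it simply quotes the list of properties from \cite{wbck1,wbck2}---so there is no in-paper argument to compare against, but your self-contained proof is exactly the kind of elementary verification those references contain.
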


It is easily seen that the axiom \rminus{sgalois} can be derived from \rminus{mdi2} and \rminus{manti}. Moreover, the axiom \rminus{salg} can be derived using \rminus{sgalois} and \rminus{x0x}: $x - y \le 0$ iff $x - 0 \le y$ iff $x \le y$.
These observations allow us to simplify the axioms of  wBCK-algebras.

\begin{prop}[\cite{wbck2}]    \label{wbck:prop}
A poset with a least element $0$ and a binary operation $-$ is a weak BCK-algebra if and only if it fulfills \textup{\rminus{mdi2}}, \textup{\rminus{manti} and \rminus{x0x}}.
\end{prop}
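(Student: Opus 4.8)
The plan is to prove the two implications separately, using the observations recorded just before the statement. The ``only if'' direction is immediate: if $(A,-,0)$ is a weak BCK-algebra, then \rminus{mdi2}, \rminus{manti} and \rminus{x0x} hold by Proposition~\ref{wBCK-prop}, so there is nothing to prove.

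For the ``if'' direction, suppose $(A,-,0)$ is a poset with least element $0$ satisfying \rminus{mdi2}, \rminus{manti} and \rminus{x0x}; the task is to recover the defining axioms \rminus{salg} and \rminus{sgalois}. I would first derive \rminus{sgalois}. Assume $x - y \le z$. By \rminus{manti} (antitonicity of subtraction in its second argument), applied to $x - y \le z$, we get $x - z \le x - (x - y)$; and \rminus{mdi2} gives $x - (x - y) \le y$. Transitivity then yields $x - z \le y$, which is precisely \rminus{sgalois}.

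It remains to derive \rminus{salg}. Since $0$ is the least element of $A$, the equation $x - y = 0$ is equivalent to the inequality $x - y \le 0$, so it is enough to show $x \le y \iff x - y \le 0$. By \rminus{x0x}, the statement $x \le y$ coincides with $x - 0 \le y$. Two instances of the \rminus{sgalois} just established --- one applied to $x - 0 \le y$ (with $0,y$ playing the roles of $y,z$), the other applied to $x - y \le 0$ (with $y,0$ playing those roles) --- give $x - 0 \le y \Rightarrow x - y \le 0$ and $x - y \le 0 \Rightarrow x - 0 \le y$. Hence $x \le y \iff x - y \le 0 \iff x - y = 0$, which is \rminus{salg}, and the proof is complete.

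The whole argument is a short chain of substitutions into the three hypotheses, so I expect no genuine obstacle; the only care needed is in the bookkeeping of which terms play the roles of $x,y,z$ in the two applications of \rminus{sgalois} and in the application of \rminus{manti}, together with the trivial appeal to $0$ being the bottom of $A$.
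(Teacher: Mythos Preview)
Your proof is correct and follows exactly the route the paper sketches in the paragraph preceding the proposition: first recover \rminus{sgalois} from \rminus{mdi2} and \rminus{manti}, then use \rminus{sgalois} together with \rminus{x0x} to get the equivalence $x - y \le 0 \Leftrightarrow x - 0 \le y \Leftrightarrow x \le y$, which is \rminus{salg}. The only addition you make is spelling out the bookkeeping explicitly, which is fine.
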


We have included the order relation $\le$ among the primitives to get a more concise axiom set. Of course, the equivalence \rminus{salg} may be considered as a definition of the relation in terms of subtraction and $0$, and then
the assumption that $A$ is a poset can be replaced by explicit order axioms \rminus{0x0}, \rminus{xx0} and
\begin{equation}
\label{leanti}
\mbox{if $x - y = 0$ and $y - x = 0$, then $x = y$}.
\end{equation}
The transitivity axiom,
\[
\mbox{if $x - y = 0$ and $y - z  = 0$, then $x - z = 0$},
\]
is derivable in thus reorganized axiom system by a use of \rminus{manti}, \rminus{0x0} and (\ref{leanti}): if $y - z = 0$, then $x - z. - .x - y = 0$, and if, in addition, $x - y = 0$, then $x - z. - 0 = 0 = 0 - .x - z$. So, $x - z = 0$.

\begin{coro} \label{wbck:eq}
An algebra $(A,-,0)$ is a wBCK-algebra with respect to the relation $\le$ defined on it by \textup{\rminus{salg}} if and only if it satisfies \textup{\rminus{mdi2}, \rminus{manti}, \rminus{xx0}, \rminus{0x0}} and \textup{(\ref{leanti})}.
\end{coro}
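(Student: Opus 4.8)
The plan is to read the corollary off from Definition \ref{wBCK:def} together with the two derivations displayed in the paragraphs just above it, so that no genuinely new computation is required. The point to keep in mind is that, once $\le$ is \emph{defined} by \rminus{salg}, showing that $(A,-,0)$ is a wBCK-algebra comes down to three things: that $\le$ is a partial order, that $0$ is its least element, and that \rminus{sgalois} holds. The axiom \rminus{x0x} that appears in Proposition \ref{wbck:prop} is not needed here, since there it only served to recover \rminus{salg} from a primitive order relation, whereas now \rminus{salg} is the definition of $\le$; in any case \rminus{x0x} holds automatically in every wBCK-algebra by Proposition \ref{wBCK-prop}. For the ``only if'' direction: if $(A,-,0)$ is a wBCK-algebra, its order relation satisfies \rminus{salg}, so $x\le y \Iff x-y=0$; then \rminus{mdi2}, \rminus{manti}, \rminus{xx0}, \rminus{0x0} are just items of Proposition \ref{wBCK-prop}, and (\ref{leanti}) is antisymmetry of $\le$ restated via \rminus{salg}.

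For the ``if'' direction I would assume \rminus{mdi2}, \rminus{manti}, \rminus{xx0}, \rminus{0x0} and (\ref{leanti}), define $\le$ by \rminus{salg}, and verify the three requirements. Reflexivity of $\le$ is \rminus{xx0}, antisymmetry is precisely (\ref{leanti}), and $0\le x$ for all $x$ is \rminus{0x0}. Transitivity I would derive exactly as in the text: if $y\le z$ then $(x-z)-(x-y)=0$ by \rminus{manti}, and if moreover $x\le y$, then $(x-z)-0=0$, while $0-(x-z)=0$ by \rminus{0x0}, so $x-z=0$ by (\ref{leanti}). Finally, with $\le$ now a partial order having $0$ as least element, \rminus{sgalois} follows: from $x-y\le z$ one gets $x-z\le x-(x-y)$ by \rminus{manti} and $x-(x-y)\le y$ by \rminus{mdi2}, hence $x-z\le y$ by transitivity. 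Definition \ref{wBCK:def} is then satisfied.

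I do not expect a real obstacle; the only thing to be careful about is the order in which the pieces are assembled. Transitivity of $\le$ must be available before \rminus{sgalois} is deduced, so I would check that the derivation of transitivity uses only \rminus{manti}, \rminus{0x0} and (\ref{leanti}) — and in particular neither \rminus{sgalois} nor \rminus{mdi2} — so that there is no circular dependence.
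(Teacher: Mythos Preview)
Your proof is correct and follows essentially the same approach as the paper: the corollary is meant to be read off from the discussion preceding it, and you have spelled out precisely that discussion --- reflexivity from \rminus{xx0}, least element from \rminus{0x0}, antisymmetry from (\ref{leanti}), transitivity via \rminus{manti}, \rminus{0x0} and (\ref{leanti}) exactly as in the paragraph before the corollary, and then \rminus{sgalois} from \rminus{mdi2} and \rminus{manti} as noted after Proposition~\ref{wBCK-prop}. Your remark on why \rminus{x0x} drops out (it was needed in Proposition~\ref{wbck:prop} only to recover \rminus{salg} from a primitive order, whereas here \rminus{salg} is the definition) is a helpful clarification that the paper leaves implicit.
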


We note without proof that, due to \rminus{mdi2}, the axioms  \rminus{xx0} and \rminus{0x0} here may be replaced by one identity
\begin{minusenum}
\item \label{x.0yx}
$x - .0 - y = x$.
\end{minusenum}

BCK-algebras are defined in \cite{bck1,bck2} as algebras $(A,0,-)$ satisfying
the same axioms (listed in the corollary) with the exception of \rminus{manti}, which actually replaces here the (stronger) BCK-axiom
\begin{minusenum}
\item \label{mAnti}
$z - y. - .z - x \le x - y$.
\end{minusenum}
The weak BCK-algebras indeed form a wider class of algebras.

\begin{exam} \label{exN5}
The five-element poset with two maximal chains $0 < a < 1$ and $0 < b < c < 1$, i.e., the non-distributive lattice $\mathbf N_5$, provides an example of a wBCK-algebra with the following operation table for $-$:
 {\small
\[
\begin{array}{|c|@{\extracolsep{6pt}}ccccc|}
\hline
\;-\-\, &  0 &  a &  b &  c &  1\vphantom{)^{A^A}} \\
\hline
0 & 0 & 0 & 0 & 0 & 0\vphantom{)^{A^A}} \\
a & a & 0 & a & a & 0 \\
b & b & b & 0 & 0 & 0 \\
c & c & c & b & 0 & 0 \\
1 & 1 & a & c & b & 0 \\
\hline
\end{array}
\] }
(It is worth to note that the axiom \rminus{salg} together with \rminus{x0x} and the following consequence of \rminus{xyx}:
\begin{equation} \label{h1}
\mbox{$x - y = x$ whenever  $x$ is an atom and $x \nleq y$}
\end{equation}
completely determine most entries in the table; they likewise help in the further examples.  See also the more general condition (\ref{h2}).)

Let as denote by $\Nv$ the obtained wBCK-lattice. Now, the values $x = a$, $y = c$, $z = 1$ falsify the axiom \rminus{mAnti}; so $\Nv$ is not a BCK-algebra.
\end{exam}

It is known well that the class of all BCK algebras is not a variety \cite{Wr}. As noticed in \cite{wbck1}, since it consists of those wBCK-algebras satisfying the inequality \rminus{mAnti}, which can be rewritten as an equation, it follows that wBCK-algebras also do not form a variety. The powerful axiom \rminus{mAnti} can be split into a pair of weaker ones.

\begin{theo} \label{bck(w)}
A wBCK-algebra is a BCK-algebra if and only if it satisfies the conditions
\begin{minusenum}
\item \label{miso}
if $x \le y$, then $x - z \le y - z$,
\item \label{exch}
$x - y. - z = x - z. - y$.
\end{minusenum}
\end{theo}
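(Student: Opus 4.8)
The plan is to prove both implications by reducing everything to the single BCK axiom \rminus{mAnti}, since by the discussion preceding Corollary~\ref{wbck:eq} a wBCK-algebra is a BCK-algebra precisely when it satisfies \rminus{mAnti}. Thus it suffices to show that, over the wBCK axioms, the inequality \rminus{mAnti} is equivalent to the conjunction of the two conditions \rminus{miso} and \rminus{exch}.

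For the forward direction, assume \rminus{mAnti} holds. To get \rminus{miso}, suppose $x \le y$; I would instantiate \rminus{mAnti} with a suitable substitution (taking the roles so that $z - y. - .z - x \le x - y$ becomes, after relabelling, $x - z. - .y - z \le y - x$), observe that $x \le y$ gives $y - x = 0$ by \rminus{salg}, hence $x - z. - .y - z = 0$, and conclude $x - z \le y - z$ again by \rminus{salg}. To get \rminus{exch}, the standard BCK argument applies: from \rminus{mAnti} one derives the exchange identity by applying \rminus{mAnti} twice (once in each direction) together with \rminus{x0x} and \rminus{mdi2}; concretely, one shows $x - y. - z \le x - z. - y$ and the reverse inequality symmetrically, then uses antisymmetry (\ref{leanti}). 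This is the routine ``BCK-algebras satisfy exchange'' computation, so I would not belabor it.

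For the converse, assume the wBCK-algebra satisfies \rminus{miso} and \rminus{exch}, and aim at \rminus{mAnti}, i.e. $z - y. - .z - x \le x - y$. The natural route: start from the trivial inequality $z - x. - .x - y \le z - x$ — actually, better, use \rminus{mdi2} in the form $x - .x - y \le y$, apply the isotonicity \rminus{miso} to subtract... Here is a cleaner sketch. By \rminus{mdi2}, $z - .z - x \le x$. Hmm — the honest approach is: we want to bound $z - y. - .z - x$; using \rminus{exch} rewrite $z - y. - .z - x$ and use \rminus{manti} (antitonicity in the second argument, available in every wBCK-algebra) together with \rminus{mdi2} and \rminus{miso}. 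Specifically, since $x - .x - y \le y$ (that is \rminus{mdi2}), apply \rminus{miso} with an appropriate term on the left to move this inequality, then apply \rminus{exch} to swap subtracted terms into the required shape, and finally use transitivity of $\le$.

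The main obstacle is the converse direction: assembling \rminus{miso}, \rminus{exch}, \rminus{mdi2} and \rminus{manti} in the right order to manufacture \rminus{mAnti} requires finding the correct chain of substitutions, and in particular choosing which subterm plays the role of the ``monotone'' variable in \rminus{miso}. I expect the key trick is: \rminus{mdi2} gives $x-.x-y\le y$; applying \rminus{manti} yields $z-y\le z-.(x-.x-y)$ — no; rather, one wants an \emph{upper} bound, so one applies \rminus{miso} to the inequality $z-y.-.z-x\le z-y$ (which is \rminus{xyx}) — that only recovers \rminus{xyx}. The genuinely delicate point, then, is seeing that \rminus{exch} is what lets $z-x$ be peeled off the left side and re-attached as a subtraction that, combined with \rminus{mdi2}, collapses to something $\le x-y$; I would locate this by working backwards from the goal, writing $z - y. - .z - x$ $=$ $z - .z - x. - y$ by \rminus{exch}, and then bounding $z - .z - x$ above by $x$ via \rminus{mdi2} and applying \rminus{miso} to subtract $y$ throughout. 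That last move — isotonicity of $u \mapsto u - y$ applied to $z - .z - x \le x$ — yields $z - .z - x. - y \le x - y$, which is exactly \rminus{mAnti} after undoing the \rminus{exch} rewrite. So the proof is short once the \rminus{exch}-then-\rminus{mdi2}-then-\rminus{miso} pattern is spotted; pinning down that pattern is the whole content.
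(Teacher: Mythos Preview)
Your converse direction is correct and is exactly the paper's argument: from \rminus{mdi2} you have $z - .z - x \le x$, apply \rminus{miso} to get $z - .z - x\di - y \le x - y$, then \rminus{exch} turns the left side into $z - y. - .z - x$, which is \rminus{mAnti}. The paper phrases it with different variable names but the three-step pattern (\rminus{mdi2}, then \rminus{miso}, then \rminus{exch}) is identical.

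There is a genuine slip in your forward sketch, however. You claim \rminus{miso} comes from \rminus{mAnti} by ``a suitable substitution (taking the roles so that $z - y. - .z - x \le x - y$ becomes, after relabelling, $x - z. - .y - z \le y - x$)''. No such relabelling exists: in \rminus{mAnti} the two outer subtrahends on the left share the same minuend $z$, while the expression $x - z. - .y - z$ you want has two different minuends. What a pure substitution into \rminus{mAnti} actually yields, once you set the right side to $0$, is \rminus{manti}, not \rminus{miso}. A correct route is to apply the exchange rule \rminus{sgalois} to \rminus{mAnti} first, obtaining $z - y. - .x - y \le z - x$; relabelling this gives $a - c. - .b - c \le a - b$, and then $a \le b$ forces $a - c \le b - c$. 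Alternatively, do as the paper does and simply cite the standard BCK facts. Your sketch for \rminus{exch} (two applications of \rminus{mAnti} plus antisymmetry) is fine in spirit, though note that it too needs \rminus{sgalois} or something equivalent rather than bare relabellings.
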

\begin{proof}
Both these conditions are BCK-theorems---see (3) and (7) in \cite{bck2}. Conversely, \rminus{manti} and \rminus{miso} imply that $x - .x - y\di - z \le y - z$, from where \rminus{mAnti} follows by \rminus{exch}.
\end{proof}

\subsection{Quasi-BCK-algebras}

By a \emph{pocrig} (in full, a partially ordered  commutative residuated integral groupoid),  we mean here an algebra $(A,+,-,0)$, where $(A,+)$ is a partially ordered commutative groupoid, $0$  is its least and simultaneously its neutral element, and $-$ is a binary operation on $A$ characterized by the condition
\[
\mbox{$x \le y + z$ if and only if $x - y \le z$}.
\]
Associative pocrigs (i.e., monoids) are known as pocrims; BCK-algebras are just $(-,0)$-subreducts of pocrims (see, e.g., \cite[Section 2]{BR} and references therein).
The following analogue of this classical result was announced, in the dual form, in \cite{amst}, and proved in \cite{qbck} (Theorem 2).

\begin{prop} \label{repr-w}
An algebra $(A,-, 0)$ is a subreduct of a pocrig if and only if it is a wBCK-algebra and satisfies the isotonicity law \textup{\rminus{miso}}.
\end{prop}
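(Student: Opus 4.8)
I would prove the two implications separately; the forward one is a short computation in the ambient pocrig, while the converse is a representation theorem calling for an explicit construction of an enveloping pocrig. For ``$\Rightarrow$'', suppose $(A,-,0)$ is a $(-,0)$-subreduct of a pocrig $(B,+,-,0)$, with the order of $A$ inherited from $B$; then $0$ remains the least element of $A$. First I would note that $+$ is automatically isotone in $B$: taking $x=y+z$ in the residuation law $x\le y+z\Leftrightarrow x-y\le z$ gives $x-y\le z$, hence $x-y\le z'$ whenever $z\le z'$, hence $x\le y+z'$, that is, $y+z\le y+z'$; isotonicity in the first argument then follows by commutativity. Next I would verify the three conditions of Proposition \ref{wbck:prop} inside $B$. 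For \rminus{x0x}: since $0$ is neutral, residuation gives $x-0\le z\Leftrightarrow x\le z$, so $x-0=x$. For \rminus{mdi2}: from $x-y\le x-y$ residuation gives $x\le y+(x-y)=(x-y)+y$, and residuation once more gives $x-(x-y)\le y$. For \rminus{manti}: if $x\le y$, then $z\le x+(z-x)\le y+(z-x)$ by isotonicity of $+$, whence $z-y\le z-x$. Thus $A$ is a wBCK-algebra by Proposition \ref{wbck:prop}, and \rminus{miso} follows in the same vein: if $x\le y$, then $x\le y\le z+(y-z)$, so $x-z\le y-z$.

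For ``$\Leftarrow$'', let $(A,-,0)$ be a wBCK-algebra satisfying \rminus{miso}; the plan is to imitate, in the non-associative setting, the classical embedding of a BCK-algebra into a pocrim (cf.\ \cite{BR}). The guiding identity is that in any pocrig $y+z=\max\{x:x-y\le z\}$, so the enveloping pocrig has to live inside a completion $B$ of the poset $A$ in which such maxima exist --- for instance the MacNeille completion of $A$, or the complete lattice of subsets of $A$ that are Galois-closed for the polarity induced by the ternary relation $x-y\le z$ (this relation is symmetric in $y$ and $z$ by \rminus{sgalois}, which is what will force $+$ to be commutative). On $B$ one extends subtraction by $u-v:=\bigwedge\{\,a-b:a,b\in A,\ u\le a,\ b\le v\,\}$; this map is isotone in $u$, antitone in $v$, and --- exactly because of \rminus{miso} and \rminus{manti} --- restricts to the original $-$ on $A$. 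One then sets $v+u:=\bigvee\{\,w\in B:w-v\le u\,\}$ and checks that $(B,+,-,0)$ is a pocrig: $0$ remains least and becomes neutral, $+$ is commutative (its defining set being symmetric in $u$ and $v$), and the residuation law $x\le v+u\Leftrightarrow x-v\le u$ holds. Since $A$ sits inside the completion as a subposet with $-$ and $0$ preserved, it is a $(-,0)$-subreduct of $B$.

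The step I expect to be the main obstacle is the residuation law in $B$ --- concretely, the implication $x\le v+u\Rightarrow x-v\le u$, which amounts to $(v+u)-v\le u$ and thus demands that the extended map $w\mapsto w-v$ preserve the join defining $v+u$. This is precisely the point at which the completion must be chosen with care and at which \rminus{miso} is indispensable: without isotonicity of $-$ in the first argument the extension formula already fails to restrict correctly, and the equivalence in the statement itself fails. An alternative that dispenses with completions constructs the free pocrig over $A$ syntactically, the analogous obstacle then being conservativity of the pocrig calculus over $A$. Either way, once faithfulness and residuation are in hand the remaining pocrig axioms reduce to routine manipulations with the wBCK axioms; the full argument is carried out in \cite{qbck}.
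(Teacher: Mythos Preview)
The paper does not actually prove this proposition: it merely records that the result was announced in \cite{amst} and proved in \cite{qbck} (Theorem~2). Your proposal therefore goes well beyond the paper's own treatment.

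Your forward direction is a complete and correct argument from the pocrig axioms, and is a genuine addition over what the paper provides. For the converse you give a reasonable roadmap---a completion on which one defines $+$ as the residuation-theoretic supremum---and you correctly identify the crux, namely that $(v+u)-v\le u$ requires the extended subtraction to commute with the join defining $v+u$, which is where \rminus{miso} enters essentially. But as written this is only a plan: the extension formula $u-v:=\bigwedge\{a-b:\ u\le a,\ b\le v\}$ needs an argument that it is well-defined and still satisfies \rminus{sgalois} on the completion, and the key preservation step is flagged but not carried out. Since you, like the paper, ultimately defer to \cite{qbck} for the full construction, your converse is at the same formal level as the paper's (a citation), just accompanied by a more informative sketch. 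If you want a self-contained proof, the syntactic alternative you mention---building the free pocrig on $A$ and proving conservativity over the qBCK fragment---is in fact closer to how \cite{qbck} proceeds, and is likely cleaner than wrestling with preservation of joins in a completion.
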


Following \cite{qbck}, we call a weak BCK-algebra satisfying \rminus{miso} a \emph{quasi-BCK-algebra}, or just \emph{qBCK-algebra}. The class of all such algebras also is not a variety.

Due to the aforementioned representation theorem of qBCK-algebras, these algebras are considered as algebraic models of a substructural implicational logic (viz., the implicational fragment of the  associative Lambek calculus) with the rules of weakening and exchange, but without the contraction rule. Proposition \ref{repr-w} shows that  quasi-BCK-algebras may likewise be related with the non-associative substructural logic.

\subsection{Meets in wBCK-algebras}

If, in some wBCK-algebra, elements $x$ and $y$ have the meet $x \wedge y$, then
 \begin{equation}   \label{minus}
x - y = x - .x \wedge y \, .
\end{equation}
Indeed, $x - .x - y \le x \wedge y$ by \rminus{mdi1} and \rminus{mdi2}; then \rminus{sgalois} gives us the inequality $x - .x \wedge y \le x - y$. On the other hand, $x - y \le x - .x \wedge y$ by \rminus{manti}.
The following useful strengthened version of (\ref{h1}):
\begin{equation}  \label{h2}
\mbox{$x - y = x$ whenever $x \wedge y = 0$}
\end{equation}
is an immediate consequence of (\ref{minus}). 

Even if the meet of $x$ and $y$ does not exist, the element $x - y$ can be presented in a form $x - z$ with $z \le x,y$: see Proposition \ref{wBCK-prop}. Moreover, the following generalization of (\ref{minus}) holds in every wBCK-algebra due to \rminus{mdi2}--\rminus{mtri} and \rminus{mdi1}:
\begin{equation}   \label{Minus}
x - y = \min(x - z\colo z \le x,y).
\end{equation}
We may conclude that every wBCK-algebra is completely determined by the structure of its initial segments. See also subsection \ref{struct1}.

Most of wBCK-algebras we shall deal with in the further sections will be meet semilattices.
By a \emph{wBCK$^\wedge$-algebra} we mean an algebra $(A,\wedge,-,0)$, where $(A,\wedge)$ is a meet semilattice and $(A,-,0)$ is a wBCK-algebra w.r.t.\ its order $\le$. It follows from Theorem 4  of \cite{wbck1} that the class of all such algebras is equationally definable by semilattice axioms, \rminus{mdi2}, \rminus{x0x} and identities
\[
x \wedge y. - x = 0 \ \text{ and } \ z - y \le z - .x \wedge y \enspace.
\]
Evidently, the first of these two conditions can be eliminated in favor of \rminus{xx0}. Again, \rminus{xx0} and \rminus{x0x} may be replaced by \rminus{x.0yx}.

\subsection{Around positive implicative wBCK-algebras}

We adapt a term used for BCK-algebras \cite{bck1,bck2} and call a wBCK-algebra \emph{positive implicative} if it satisfies the \emph{contraction law}
\begin{minusenum}
\item \label{mcontr}
$x - y. - y = x - y$.
\end{minusenum}
A weaker form of the law is the \emph{contraction rule}
\begin{minusenum}
\item \label{mcontr-}
if $x - y \le y$, then $x \le y$.
\end{minusenum}
As noticed in \cite[p.\ 482]{wbck1}, the contraction rule is equivalent to \rminus{mcontr} in every BCK-algebra; however, it is not the case for weak BCK-algebras.

\begin{exam}    \label{posim}
Let us consider the poset depicted below at the left:

\setlength\unitlength{.5cm}
\begin{center}
\begin{picture}(8,7)(3,2.4)
\put(10,5){\line(0,1){2}}
\put(10,7){\line(-3,-2){3}}
\put(7,5){\line(0,-1){2}}
\put(7,3){\line(3,2){3}}
\put(10,5){\line(-3,2){6}}
\put(4,9){\line(0,-1){4}}
\put(4,5){\line(3,-2){3}}
\put(7,5){\line(-3,2){3}}
\put(10,5){\circle*{.2}}
\put(10,7){\circle*{.2}}
\put(7,5){\circle*{.2}}
\put(7,3){\circle*{.2}}
\put(10,5){\circle*{.2}}
\put(4,9){\circle*{.2}}
\put(4,7){\circle*{.2}}
\put(4,5){\circle*{.2}}
\put(7,5){\circle*{.2}}
\put(7,2.5){\makebox(0,0){$0$}}
\put(3.5,5){\makebox(0,0){$a$}}
\put(3.5,7){\makebox(0,0){$b$}}
\put(3.5,9){\makebox(0,0){$c$}}
\put(10.5,7){\makebox(0,0){$d$}}
\put(10.5,5){\makebox(0,0){$e$}}
\put(7.1,5.6){\makebox(0,0){$f$}}
\end{picture}
\qquad\qquad
\raisebox{1.8cm}{{\small \(
\begin{array}{|c|@{\extracolsep{6pt}}ccccccc|}
\hline
\;-\-\, &  0 &  a &  b &  c & d & e & f \vphantom{)^{A^A}} \\
\hline
0 & 0 & 0 & 0 & 0 & 0 & 0 & 0\vphantom{)^{A^A}} \\
a & a & 0 & 0 & 0 & a & a & a \\
b & b & f & 0 & 0 & a & b & a \\
c & c & c & e & 0 & b & c & c \\
d & d & d & d & d & 0 & d & d \\
e & e & e & e & 0 & 0 & 0 & e \\
f & f & f & 0 & 0 & 0 & f & 0 \\
\hline
\end{array}
\)} }
\end{center}

\noindent
When equipped with an operation $-$ with the presented table,
it becomes a wBCK-algebra (even a qBCK-algebra), in which \rminus{mcontr-} holds.  However, the algebra is not positive implicative: $c - d \neq c - d. - d$ (and, hence, not a BCK-algebra).
\end{exam}

The right distributive law
\begin{minusenum}
\item  \label{mdistr}
$x - z. - .y - z = x - y. -z$,
\end{minusenum}
which implies \rminus{mcontr}, and is equivalent to it in BCK-algebras \cite[Theorem 8]{bck2}, is too strong in the context of wBCK-algebras---it follows from Corollary 2.3 in \cite{ChH3} that a wBCK-algebra satisfying this law is necessarily a BCK-algebra.
A direct proof: by \rminus{xyx}, \rminus{manti} and \rminus{mdistr}, $z - y. - x \le z - y. - .x - y = z - x. - y$; this yields \rminus{exch}, and if $x \le y$, then by \rminus{salg}, \rminus{0x0} and \rminus{mdistr}, $0 = x - y = x - y. - z = x - z. - .y - z$; this yields \rminus{miso}.

In contrary, the \emph{Pierce law}
\begin{minusenum}
\item \label{pierce}
$x - .y - x = x$,
\end{minusenum}
which was used in \cite{bck2} as the defining condition for implicative BCK-algebras, turns out to be not strong enough  in the context of wBCK-algebras; we introduce implicative wBCK-algebras in another way in Section \ref{uoimpl}. Nevertheless, a wBCK-algebra satisfying \rminus{pierce} is always positive implicative: applying this identity twice, we get $x - y. - y = x - y. - \di y - .x - y = x - y$. Therefore, \rminus{mcontr-} also holds in such an algebra.
However, the three-element discrete wBCK-chain is an example of a positive implicative wBCK-algebra where \rminus{pierce} fails. We shall see in Section \ref{oimpl} that wBCK-algebras satisfying \rminus{pierce} form an equational class.

A wBCK-algebra satisfies the Pierce law if and only if the following consequence (by \rminus{salg}) of the law:
\begin{minusenum}
\item if $x \le y - x$, then $x = 0$. \label{pierce'}
\end{minusenum}
is fulfilled in it for all $x$ and $y$. Indeed, if \rminus{pierce'} holds, then, in particular, if $u \le a$ and $u \le b - a$ for some $a$ and $b$, then $b - a \le b - u$ by \rminus{manti} and, further, $u = 0$. Therefore,
\begin{equation}
x \wedge (y - x) = 0 \label{wm0}
\end{equation}
for all $x, y$; now \rminus{pierce} follows by virtue of (\ref{h2}). By the way, we have proved that the Pierce law is equivalent also to the condition (\ref{wm0}).

\section{Commutative wBCK-algebras} \label{commw}

\subsection{Preliminaries}
We extend to weak BCK-algebras also the standard definition of a commutative BCK-algebra \cite{bck1,bck2}.

\begin{defi}
A wBCK-algebras is said to be \emph{commutative} if the identity
\begin{minusenum}
\item  \label{mcommut}
$x - .x - y = y - .y -x$
\end{minusenum}
holds in it.
\end{defi}

\begin{theo}  \label{commiff}
A wBCK-algebra is commutative if and only if it fulfils any of the following (equivalent) conditions:
\begin{minusenum}
\item  \label{midem}
if $x \le y$, then $x \le y - .y - x$,
\item   \label{</-}
$x \le y$ if and only if\/ $y - .y - x = x$,
\item   \label{<//-}
$x \le y$ if and only if $x = y - z$ for some $z$.
\end{minusenum}
\end{theo}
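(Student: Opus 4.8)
The plan is to run the cycle of implications
\rminus{mcommut}~$\Rightarrow$~\rminus{midem}~$\Rightarrow$~\rminus{</-}~$\Rightarrow$~\rminus{<//-}~$\Rightarrow$~\rminus{mcommut},
which at one stroke shows that each of \rminus{midem}, \rminus{</-}, \rminus{<//-} is equivalent to commutativity and that the three conditions are mutually equivalent. Throughout I would use only the basic laws collected in Proposition \ref{wBCK-prop} together with \rminus{salg} and the Galois law \rminus{sgalois}; note that \rminus{mdi2} and \rminus{mdi1}, read with $x$ and $y$ interchanged, say $y - .y - x \le x$ and $y - .y - x \le y$, which will be used repeatedly.

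For \rminus{mcommut}~$\Rightarrow$~\rminus{midem}: if $x \le y$ then $x - y = 0$ by \rminus{salg}, hence $x - .x - y = x - 0 = x$ by \rminus{x0x}, and commutativity turns this into $y - .y - x = x$; in particular $x \le y - .y - x$. (Incidentally this already yields the nontrivial direction of \rminus{</-} as well.)

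Next I would dispose of the two easy links and observe that the backward directions of \rminus{</-} and \rminus{<//-} require nothing beyond Proposition \ref{wBCK-prop}: if $y - .y - x = x$ then $x = y - .y - x \le y$ by \rminus{mdi1}, and if $x = y - z$ then $x \le y$ by \rminus{xyx}. For \rminus{midem}~$\Rightarrow$~\rminus{</-}, the forward direction is what is new: if $x \le y$, then \rminus{midem} gives $x \le y - .y - x$ while \rminus{mdi2} gives $y - .y - x \le x$, so $y - .y - x = x$. Then \rminus{</-}~$\Rightarrow$~\rminus{<//-} is immediate: from $x \le y$ we get $x = y - .y - x$ by \rminus{</-}, so $z := y - x$ witnesses \rminus{<//-}.

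The only step carrying real content is \rminus{<//-}~$\Rightarrow$~\rminus{mcommut}, and this is where I expect the obstacle to lie, since one must recover the symmetric identity from a merely existential hypothesis. Fix $x,y$ and put $c := x - .x - y$. By \rminus{mdi2} we have $c \le y$, so \rminus{<//-} supplies some $z$ with $c = y - z$; by \rminus{mdi1} we also have $c \le x$, i.e.\ $y - z \le x$. Applying \rminus{sgalois} to this last inequality gives $y - x \le z$, and then \rminus{manti} gives $y - z \le y - .y - x$. Since $y - z = c = x - .x - y$, this reads $x - .x - y \le y - .y - x$; swapping $x$ and $y$ gives the reverse inequality, and \rminus{mcommut} follows. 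The knack is choosing exactly this Galois-then-antitonicity chain to turn the unknown $z$ into the term $y - .y - x$; all the remaining work is routine bookkeeping with Proposition \ref{wBCK-prop}.
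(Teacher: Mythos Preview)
Your proof is correct. The organization differs slightly from the paper's: the paper does not run a single cycle but instead proves \rminus{midem}~$\Rightarrow$~\rminus{mcommut} directly (via $x - .x - y \le y - \bigl(y - (x - .x - y)\bigr) \le y - .y - x$, using \rminus{midem} for the first step and \rminus{mdi1} with two applications of \rminus{manti} for the second), and then separately shows \rminus{<//-}~$\Rightarrow$~\rminus{</-} by invoking the triple-negation law \rminus{mtri}: if $x = y - z$ then $y - .y - x = y - \bigl(y - (y - z)\bigr) = y - z = x$. Your route closes the loop instead at \rminus{<//-}~$\Rightarrow$~\rminus{mcommut} via the Galois law \rminus{sgalois}, which lets you avoid \rminus{mtri} altogether. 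The two arguments are close cousins---both ultimately establish $x - .x - y \le y - .y - x$ from $c := x - .x - y \le x,y$---but yours trades the double antitonicity step for a single Galois swap, while the paper's modular layout makes each pairwise equivalence visible on its own.
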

\begin{proof}
Evidently, \rminus{mcommut} implies \rminus{midem} in virtue of \rminus{salg} and \rminus{x0x}. Conversely, $x - .x - y \le y - .y - (x - .x - y) \le y - .y - x$ by \rminus{mdi2}, \rminus{midem} and \rminus{mdi1}, \rminus{manti}. Likewise, $y - .y - x \le x - .x - y$.

The condition \rminus{midem} is included in \rminus{</-}, but is equivalent to the latter by virtue of \rminus{mdi2} and \rminus{xyx}.
Further, the ``if'' parts in both \rminus{</-} and \rminus{<//-} are evident in virtue of \rminus{xyx}, while the ``only if'' part of \rminus{<//-} follows from that of \rminus{</-}. At last, to derive the ``only if'' part of \rminus{</-} from \rminus{<//-}, suppose that $x \le y$. Then $x = y - z$ for some $z$, and $y - .y - x = x$ due to \rminus{mtri}.
\end{proof}

The relationship \rminus{midem} is more convenient than \rminus{mcommut} for checking if a wBCK-algebra is commutative: it does not require looking over so many pairs of elements. For instance, the wBCK-algebra $\Nv$ from Example \ref{exN5} satisfies \rminus{midem}; thus, a commutative wBCK-algebras need not be a BCK-algebra. If we change the last line in its operation table by putting $1 - a = b$, $1 - b = a$, $1 - c = a$, we obtain an non-commutative wBCK-algebra $\Nw$, in which $c \nleq 1 - .1- c$.

We now can extend to commutative wBCK-algebras a result known well for BCK-algebras (see \cite[Theorem 3]{bck2}, where it  was proved using \rminus{exch} essentially).

\begin{coro}   \label{wed/-}
An operation $\wedge$ on a commutative wBCK algebra $A$ is meet in this poset if and only if it is related to subtraction by
\[
x \wedge y = x - .x - y \enspace .
\]
\end{coro}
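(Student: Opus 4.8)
The plan is to derive both implications from the order-theoretic reformulations of commutativity collected in Theorem~\ref{commiff}, the identity~(\ref{minus}), and the elementary inequalities of Proposition~\ref{wBCK-prop}.

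For the ``only if'' part, I would assume that $\wedge$ is the meet operation of the poset $A$. Since $x\wedge y\le x$, the ``only if'' direction of condition \rminus{</-}, applied with $x\wedge y$ and $x$ in place of $x$ and $y$, gives $x\wedge y = x - (x - (x\wedge y))$. I would then rewrite the inner subtraction by~(\ref{minus}), which yields $x - (x\wedge y) = x - y$; substituting, one obtains $x\wedge y = x - (x-y)$, as claimed. (An equivalent route: $x-(x-y)\le x\wedge y$ holds always by \rminus{mdi1} and \rminus{mdi2}, and the reverse inequality follows from \rminus{midem} applied to $x\wedge y\le x$ together with~(\ref{minus}).)

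For the ``if'' part, I would assume the operation $\wedge$ satisfies $x\wedge y = x-(x-y)$ for all $x,y$ and check that this term is the greatest lower bound of $x$ and $y$. It is a lower bound: $x-(x-y)\le x$ by \rminus{mdi1} and $x-(x-y)\le y$ by \rminus{mdi2}. To see it is the largest one, take any $z$ with $z\le x$ and $z\le y$. From $z\le y$ and \rminus{manti} we get $x-y\le x-z$, whence a second use of \rminus{manti} gives $x-(x-z)\le x-(x-y)$; on the other hand, $z\le x$ together with \rminus{</-} gives $z = x-(x-z)$. Therefore $z\le x-(x-y)=x\wedge y$. As a greatest lower bound is unique when it exists, $\wedge$ is indeed the meet operation of $A$ (and this also re-derives, constructively, that $A$ is a meet semilattice).

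No serious obstacle arises; the one point to keep in mind is that commutativity enters exactly through the equivalence \rminus{</-} (equivalently \rminus{midem}): in an arbitrary wBCK-algebra $x-(x-y)$ is still a lower bound of $x$ and $y$, but without commutativity a common lower bound $z$ need not satisfy $z = x-(x-z)$, so the greatest-lower-bound step can fail.
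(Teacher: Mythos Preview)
Your proof is correct and follows essentially the same route as the paper's own argument: both directions use the identity~(\ref{minus}) together with \rminus{</-} for the ``only if'' part, and for the ``if'' part both show that $x-(x-y)$ is a lower bound via \rminus{mdi1}, \rminus{mdi2} and then derive its maximality from \rminus{manti} and \rminus{</-}. The only cosmetic difference is the order in which (\ref{minus}) and \rminus{</-} are invoked in the first direction.
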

\begin{proof}
Assume that $\wedge$ is meet. By (\ref{minus}) and \rminus{</-}, then $x - . x - y = x - \di x - .x \wedge y = x \wedge y$. Conversely, according to  \rminus{mdi2} and \rminus{mdi1}, $x - .x - y$ is a lower bound of $x$ and $y$. If $u$ is one more lower bound, then $x - y \le x - u$ and $u = x - .x - u \le x - .x - y$  by \rminus{manti}, \rminus{</-} and \rminus{manti}. Thus $x - .x - y$ is the greatest lower bound of $x$ and $y$, as needed.
\end{proof}

By a different approach, which relies on the order structure of commutative wBCK-algebras, this result is obtained in subsection 6.1 of \cite{wbck3}.
A similar dual construction in a context not related to wBCK*-algebras is presented also in \cite{ChH1}, \label{rf1}\cite{ChHK2}, \cite{ChKS} and \cite{ChL}.
The algebra $\Nw$ just described shows that a wBCK$^\land$-algebra is not necessarily commutative.

\subsection{Equational axioms}

So, every commutative wBCK-algebra can be turned into a wBCK$^\wedge$-algebra, with the meet operation term-definable. This implies that the class of commutative wBCK-algebras is equationally definable (cf.\ subsection \ref{prelim}.3). As shown in \cite{Yut}, the class of commutative BCK-algebras is characterized by equations \rminus{xx0}, \rminus{x0x}, \rminus{mcommut} and \rminus{exch}. An equational axiom system appropriate for commutative wBCK-algebras can be obtained by replacing the latter identity with a weaker one:
 \begin{minusenum}
\item \label{c2}
$z - y. - (z - \di y - .y - x) = 0$,
\end{minusenum}
which is a version of \rminus{manti}.

\begin{theo}    \label{comm:equ}
An algebra $(A,-,0)$ is a commutative wBCK-algebra with respect to the relation $\le$ defined by \textup{\rminus{salg}} if and only if it satisfies the equations {\upshape \rminus{xx0}, \rminus{x0x}, \rminus{mcommut}} and \textup{\rminus{c2}}.
\end{theo}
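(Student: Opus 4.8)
The plan is to verify the two directions separately, leaning on Corollary~\ref{wbck:eq} for the heavy lifting on the wBCK-side and on Theorem~\ref{commiff} for the commutativity part. For the ``only if'' direction, suppose $(A,-,0)$ is a commutative wBCK-algebra. Then \rminus{xx0} and \rminus{x0x} are among the basic properties listed in Proposition~\ref{wBCK-prop}, \rminus{mcommut} is the defining identity of commutativity, and \rminus{c2} is obtained by combining \rminus{manti} with commutativity: since $y - x \le y$ always holds (by \rminus{xyx}), \rminus{manti} gives $y - .y - x \le y$, hence $z - y \le z - .y - .y - x$, and then $z - y \le z - .y - .y - x = z - .x - .x - y$ by \rminus{mcommut}; applying \rminus{salg} turns this into \rminus{c2}. (One should double-check the parenthesisation of \rminus{c2} against the condensed-notation conventions, but this is the identity obtained.)

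For the ``if'' direction---the substantive half---I would show that the four equations, together with the relation $\le$ defined by \rminus{salg}, force all five conditions of Corollary~\ref{wbck:eq}, namely \rminus{mdi2}, \rminus{manti}, \rminus{xx0}, \rminus{0x0}, (\ref{leanti}), plus the commutativity identity \rminus{mcommut}. Two of these, \rminus{xx0} and (the given) \rminus{x0x}, are hypotheses. The key reductions are: (i) derive \rminus{0x0} --- from \rminus{xx0} and \rminus{salg} one gets $0 \le x$ for all $x$ once antisymmetry is in place, but more directly $0 - x$ should be handled by specializing \rminus{c2} or using \rminus{x0x}; (ii) derive \rminus{manti} --- this is where \rminus{c2} does its work: \rminus{c2} says, via \rminus{salg}, that $z - y \le z - .y - .y - x$ for all $x,y,z$, and once \rminus{mcommut} is available this reads $z - y \le z - .x - .x - y$; since $x - .x - y$ ranges over (at least) all elements below $y$ as $x$ varies --- in particular $x - .x - y = x$ when... here one must be careful --- the aim is to extract genuine antitonicity $w \le y \Rightarrow z - y \le z - w$; (iii) derive \rminus{mdi2}, i.e.\ $x - .x - y \le y$, which should come from \rminus{c2} by a suitable substitution (taking $z := x$, noting $x - .x - y$ has the right shape). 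The remaining axiom (\ref{leanti}) is antisymmetry of the defined $\le$, which needs $x - y = 0 = y - x \Rightarrow x = y$; given \rminus{mcommut} and \rminus{x0x}, from $x - y = 0$ one gets $x - .x - y = x - 0 = x$, and symmetrically $y - .y - x = y$, and these two left-hand sides are equal by \rminus{mcommut}, so $x = y$ --- this is clean.

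The main obstacle is step (ii): squeezing full antitonicity \rminus{manti} out of the special instance encoded in \rminus{c2}. The identity \rminus{c2} only directly gives information about expressions of the form $x - .x - y$ on the right-hand side, not about an arbitrary element $w \le y$; bridging that gap requires knowing that every $w$ with $w \le y$ can be written as $w = y - (y - w)$, i.e.\ condition \rminus{</-}/\rminus{<//-} of Theorem~\ref{commiff}. But Theorem~\ref{commiff} presupposes we are already in a (commutative) wBCK-algebra, so there is a potential circularity to navigate: one must establish enough of the wBCK axioms from \{\rminus{xx0}, \rminus{x0x}, \rminus{mcommut}, \rminus{c2}\} alone before invoking it, or else redo the relevant fragment of its proof using only the four equations. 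I expect the correct route is to first get \rminus{mdi2} and \rminus{manti} simultaneously by a careful bootstrapping --- e.g.\ instantiate \rminus{c2} with cleverly chosen terms for $x,y,z$, use \rminus{mcommut} to swap the two ``double-subtraction'' forms, and use \rminus{x0x} to collapse subtractions by $0$ --- treating \rminus{c2} essentially as ``$z - y \le z - (x - (x - y))$ for all $x$'' and then choosing $x$ so that $x - (x - y)$ becomes an arbitrary prescribed lower bound of $y$. Once \rminus{mdi2}, \rminus{manti}, \rminus{x0x} are in hand, Proposition~\ref{wbck:prop} certifies we have a wBCK-algebra, \rminus{mcommut} makes it commutative, and the proof is complete.
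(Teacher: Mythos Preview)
Your overall plan matches the paper's: check the conditions of Corollary~\ref{wbck:eq} from the four equations, with \rminus{mcommut} supplying commutativity. Your argument for antisymmetry~(\ref{leanti}) is exactly the paper's. But the ``main obstacle'' you worry about in step~(ii) is illusory, and seeing why is the one idea you are missing. You fear a circularity in showing that every $w \le y$ has the form $y - (y - w)$ before the wBCK structure is available; in fact this needs \emph{only} the hypotheses: if $w - y = 0$, then by \rminus{mcommut} and \rminus{x0x} alone,
\[
y - (y - w) \;=\; w - (w - y) \;=\; w - 0 \;=\; w.
\]
Substituting this into \rminus{c2} instantly yields $z - y \le z - w$, i.e.\ \rminus{manti}. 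No bootstrapping, no appeal to Theorem~\ref{commiff}, no cleverly chosen $x$.

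For the pieces you leave vague: the paper gets \rminus{0x0} by specializing \rminus{c2} with $z = y := x$ and $x := 0$ (then \rminus{xx0}, \rminus{x0x} collapse everything to $0 - x = 0$); it gets \rminus{mdi1} by taking $z := x$, $y := x - y$, $x := 0$ in \rminus{c2}, and then \rminus{mdi2} follows from \rminus{mdi1} via \rminus{mcommut}. In the ``only if'' direction your detour through \rminus{mcommut} is unnecessary: \rminus{c2} reads, via \rminus{salg}, as $z - y \le z - (y - (y - x))$, which is immediate from \rminus{mdi1} (i.e.\ $y - (y - x) \le y$) and \rminus{manti}, just as the paper says.
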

\begin{proof}
If $A$ is indeed a commutative wBCK-algebra, then \rminus{c2} holds in it by virtue of \rminus{mdi1} and \rminus{manti}. Now assume that $A$ satisfies the four equations. With  Corollary \ref{wbck:eq} in mind, we shall demonstrate (\ref{leanti}), \rminus{0x0}, \rminus{mdi2} and \rminus{manti}.
The relation $\le$ defined by \rminus{salg} is antisymmetric: if $x \le y$ and $y \le x$, then $x = x - 0 = x - .x - y = y - .y - x = y - 0 = y$.
The following particular case of \rminus{c2}:
\[
x - x. - (x - \di x - .x - 0) = 0
\]
leads us, in virtue of \rminus{x0x} and \rminus{xx0}, to the equation $0 - x = 0$,
i.e., to \rminus{0x0}.
Another particular case,
\[
x - .x - y\di - (x - (x - y. - \di x - y. - 0)) = 0,
\]
similarly reduces to the equation $x - .x - y\di - x = 0$, which, together with \rminus{mcommut}, gives us \rminus{mdi2}.
At last, \rminus{manti} holds: if $x \le y$, then $x - y = 0$,  $y - .y - x = x - .x - y = x$ by \rminus{x0x} and $z - y. - .z - x = z - y. - (z - \di y - .y - x) = 0$ by \rminus{c2}, i.e., $z - y \le z - x$.  It remains to apply Corollary \ref{wbck:eq}.
\end{proof}

With some inessential distinctions, a dual set of axioms appears in other connection in Section 6.4 of \cite{ChHK2}.
Equivalent versions of it are used in \label{rf2}
\cite{ChH1}, \cite{ChKS} and \cite{ChL} to define respectively the classes of Abbot groupoids, implication basic algebras and so called strong I-algebras (cf.\ also \cite[Lemma 1]{ChE1}). Therefore, each of these classes may be identified with that of commutative wBCK*-algebras, and various results obtained in the mentioned papers can be transferred to commutative wBCK-algebras.

In fact, the variety of commutative wBCK-algebras is even 3-based.

\begin{theo} \label{comm:equ'}
The class of commutative wBCK-algebras is characterized by axioms \textup{\rminus{mcommut}}, \textup{\rminus{c2}} and either \textup{\rminus{x.0yx}} or
\begin{minusenum}
\item   \label{pw}
$x -\di x - y. - x = x$ \enspace.
\end{minusenum}
\end{theo}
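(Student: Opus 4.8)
The plan is to reduce Theorem \ref{comm:equ'} to the already-proved Theorem \ref{comm:equ} by showing that, in the presence of \rminus{mcommut} and \rminus{c2}, the single identity \rminus{pw} (or alternatively \rminus{x.0yx}) entails the remaining axioms \rminus{xx0} and \rminus{x0x} of that theorem. Since \rminus{x.0yx} was already noted in the excerpt to be a substitute for the pair \rminus{xx0}, \rminus{x0x} in the wBCK setting, the real content is the alternative: deriving \rminus{xx0} and \rminus{x0x} from \rminus{mcommut}, \rminus{c2} and \rminus{pw}. So I would first dispose of the \rminus{x.0yx} version with a one-line remark referring back to the discussion after Corollary \ref{wbck:eq}, and then concentrate on the \rminus{pw} version.

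For the \rminus{pw} version, the key steps, in order, are: (1) First extract $x - x = 0$, i.e.\ \rminus{xx0}. A natural attempt is to specialize \rminus{pw} with $y := x$ to get $x - \di x - x. - x = x$, and to combine this with \rminus{mcommut} specialized appropriately; one wants to force $x - x$ into a position where it can be cancelled. Alternatively, specialize \rminus{c2} with suitable substitutions (e.g.\ $y := x$, or $x := y$) to produce an equation of the form $(\text{something}) - (\text{something}) = 0$ that, after using \rminus{pw} to simplify the inner terms, collapses to $x - x = 0$. (2) Once \rminus{xx0} is available, derive \rminus{x0x}, i.e.\ $x - 0 = x$: plug $y$ such that $x - y$ behaves like $0$ — concretely, using \rminus{xx0}, the term $x - .x - x = x - 0$, and \rminus{pw} with a clever substitution should identify this with $x$. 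In fact \rminus{mcommut} with $y := 0$ reads $x - .x - 0 = 0 - .0 - x$, so if one can independently show $0 - x = 0$ (or at least $0 - .0 - x = 0$) then $x - .x - 0$ must be $0$ too — but that is the wrong direction, so instead I expect to get $x - 0 = x$ directly from a specialization of \rminus{pw} after substituting a term known (via \rminus{xx0}) to equal $0$. (3) Having \rminus{xx0} and \rminus{x0x}, invoke Theorem \ref{comm:equ} to conclude.

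The main obstacle I anticipate is step (1): bootstrapping $x - x = 0$ from the three identities without circularity. The difficulty is that \rminus{pw} and \rminus{mcommut} are "structural" identities that permute and nest subtractions but never on their own produce the constant $0$ on the right-hand side; only \rminus{c2} has a literal $0$, and its left-hand side is a fairly deeply nested term $z - y. - (z - \di y - .y - x)$. So the crux is choosing substitutions in \rminus{c2} so that the nested inner term $z - \di y - .y - x$ simplifies — via \rminus{pw} and \rminus{mcommut} — to something that makes the whole outer difference collapse to an equation equivalent to \rminus{xx0}. I would experiment with $z := x$, $y := x$, which turns the inner term into $x - \di x - .x - x$ and the outer into $x - x. - (\cdots)$; combined with \rminus{pw} (taking $y := x$ there as well) this looks promising. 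I expect the rest — steps (2) and (3) — to be short and mechanical once step (1) is in hand.
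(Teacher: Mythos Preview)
Your overall reduction to Theorem~\ref{comm:equ} is correct and is exactly what the paper does. Two points, however, do not go through as you describe.

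First, the \rminus{x.0yx} case cannot be dismissed by the remark after Corollary~\ref{wbck:eq}. That remark says \rminus{x.0yx} can replace \rminus{xx0} and \rminus{0x0} (not \rminus{x0x}) in the presence of \rminus{mdi2}, \rminus{manti} and~(\ref{leanti}); here the ambient axioms are \rminus{mcommut} and \rminus{c2}, so the reference simply does not apply. The paper argues directly: the instance $x:=0-y$ of \rminus{x.0yx} gives $(0-y)-(0-y)=0-y$; substituting $0-y$ for both $x$ and $y$ in \rminus{c2} then collapses it to $(z-(0-y))-(z-(0-y))=0$, whence $z-z=0$ by \rminus{x.0yx}. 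Now $(0-y)-(0-y)=0$ together with $(0-y)-(0-y)=0-y$ forces $0-y=0$, and \rminus{x.0yx} becomes \rminus{x0x}.

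Second, for the \rminus{pw} case your order is reversed, and the substitution you propose does not yield \rminus{xx0}. Taking $z=y=x$ in \rminus{c2} only gives $(x-x)-\bigl(x-(x-(x-x))\bigr)=0$, which does not simplify further without something like \rminus{x0x} already available. The paper instead gets \rminus{x0x} \emph{first}, via an intermediate step you are missing: from \rminus{pw} with $y:=x$ one has $y-\bigl((y-y)-y\bigr)=y$, and then \rminus{c2} (suitably instantiated) yields $(y-y)-y=0$. Feeding this back into \rminus{pw} gives $x-0=x$, and from this also $x-(x-x)=x$. Only then does \rminus{c2} with $x=y=0$ reduce to $(z-0)-(z-0)=0$, i.e.\ \rminus{xx0}. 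So the bootstrapping runs through the weaker identity $(y-y)-y=0$ rather than through \rminus{xx0} directly.
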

\begin{proof}
Evidently, \rminus{x.0yx} holds in all wBCK-algebras. The identity \rminus{pw} also holds in every wBCK-algebra: due to \rminus{xyx}, \rminus{salg} and \rminus{x0x},
\( x -\di x - y. - x = x - 0 = x\, \).
Now assume that an algebra $(A,-,0)$ satisfies any of the two triples of axioms; we should to demonstrate that they imply \rminus{xx0} and \rminus{x0x} (see Theorem \ref{comm:equ}).

Substituting $0 - y$ for both $x$ and $y$ in \rminus{c2} and using the particular case $0 - y. - .0 - y = 0 - y$ of \rminus{x.0yx}, we obtain the identity $z - .0 - y\di - \di z - .0 - y = 0$. By \rminus{x.0yx}, then $z - z = 0$, which is \rminus{xx0}. In particular, $0 - y. - .0 - y = 0$; this identity together with the mentioned particular case of \rminus{x.0yx} provides \rminus{x0x}.

On the other hand, \rminus{pw} implies that $y - \di y - y. - y = y$, and then it follows from \rminus{c2} that $y - y. - y = 0$. Now  \rminus{pw} implies also \rminus{x0x} and, further, the identity $x - .x - x = x$. Then a substitution of $0$ for $x$ and $y$ in \rminus{c2} yields the identity $z - 0. - .z - 0 = 0$; so, \rminus{xx0} also holds in virtue of \rminus{x0x}.
\end{proof}

\subsection{Commutative qBCK-algebras}
The next proposition implies that the class of commutative qBCK algebras is a subvariety of the variety of commutative wBCK-algebras.

\begin{prop}
A commutative wBCK-algebra is a qBCK-algebra if and only if it satisfies any of the (equivalent) conditions
\begin{minusenum}
\item   \label{miso++}
$y - .y - x\di - z \le y -z$,
\item   \label{miso+}
$y - u. - z \le y - z$.
\end{minusenum}
\end{prop}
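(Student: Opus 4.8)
The plan is to prove the cycle of implications \rminus{miso} $\Rightarrow$ \rminus{miso+} $\Rightarrow$ \rminus{miso++} $\Rightarrow$ \rminus{miso}. Since, by Proposition \ref{repr-w}, a wBCK-algebra is a qBCK-algebra precisely when it satisfies the isotonicity law \rminus{miso}, this cycle simultaneously yields the equivalence of being a qBCK-algebra with \rminus{miso++}, with \rminus{miso+}, and of these two conditions with each other.

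First I would dispatch the two easy links. For \rminus{miso} $\Rightarrow$ \rminus{miso+}: by \rminus{xyx} one has $y - u \le y$, so substituting $y - u$ for $x$ in \rminus{miso} gives $y - u. - z \le y - z$ directly. For \rminus{miso+} $\Rightarrow$ \rminus{miso++}: specialise \rminus{miso+} by taking $u := y - x$; its left-hand side then becomes $y - .y - x\di - z$, which is exactly the left-hand side of \rminus{miso++}.

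The remaining implication \rminus{miso++} $\Rightarrow$ \rminus{miso} is the only one that uses commutativity, and even it is immediate given what has already been established. Assume $x \le y$. By the ``only if'' part of \rminus{</-} (equivalently, by Corollary \ref{wed/-}, since $y \wedge x = x$ when $x \le y$) we have $y - .y - x = x$. Substituting this into \rminus{miso++} yields $x - z = y - .y - x\di - z \le y - z$, which is \rminus{miso}; hence the algebra is a qBCK-algebra.

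I expect no genuine obstacle here. The conceptual content is just that in a commutative wBCK-algebra the initial segment $[0,y]$ is precisely the set of elements $y - u$ (by \rminus{<//-}), equivalently the set of elements $y - .y - x = y \wedge x$; so each of \rminus{miso++} and \rminus{miso+} says no more than that $w - z \le y - z$ for $w \le y$, merely phrased via one of these two representations of the elements below $y$. The only point requiring a little care is to invoke the correct earlier result---\rminus{</-} or Corollary \ref{wed/-}---for the identity $y - .y - x = x$ valid under $x \le y$.
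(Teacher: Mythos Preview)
Your proof is correct and follows essentially the same route as the paper, whose entire argument is the terse hint ``By \rminus{</-}, \rminus{<//-} and \rminus{xyx}.'' Your cycle \rminus{miso} $\Rightarrow$ \rminus{miso+} $\Rightarrow$ \rminus{miso++} $\Rightarrow$ \rminus{miso} unpacks exactly these references; the only cosmetic difference is that the paper presumably uses \rminus{<//-} to pass directly from \rminus{miso+} back to \rminus{miso} (if $x \le y$ then $x = y - u$ for some $u$), whereas you close the loop via \rminus{miso++} and \rminus{</-} instead. One small correction: the fact that a qBCK-algebra is by definition a wBCK-algebra satisfying \rminus{miso} is stated in the text immediately after Proposition~\ref{repr-w}, not in that proposition itself.
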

\begin{proof}
By \rminus{</-}, \rminus{<//-} and \rminus{xyx}.
\end{proof}

\begin{exam} \label{om6}
This subvariety is proper. The bounded lattice $\Ov$ with six elements $0 < a,b,c,d < 1$ and the operation table
{\small\[
\begin{array}{|c|@{\extracolsep{6pt}}cccccc|}
\hline
\;-\-\, &  0 &  a &  b &  c & d & 1\vphantom{)^{A^A}} \\
\hline
0 & 0 & 0 & 0 & 0 & 0 & 0\vphantom{)^{A^A}} \\
a & a & 0 & a & a & a & 0 \\
b & b & b & 0 & b & b & 0 \\
c & c & c & c & 0 & c & 0 \\
d & d & d & d & d & 0 & 0 \\
1 & 1 & b & a & d & c & 0 \\
\hline
\end{array}
\]}
for an operation $-$ is a commutative wBCK-algebra (in fact, the order dual of the orthoimplication algebra discussed in \cite[Remark]{Abb2}).
As observed there, it does not satisfy the isotonicity law \rminus{miso}: $b \le 1$, $b - c = b$ and $1 - c = 0$.
\par
\end{exam}

\begin{theo}
In a commutative qBCK-algebra, if $x \vee y$ exists, then
\begin{gather}
\mbox{$x \vee y. - x \le y$}.
\end{gather}
\end{theo}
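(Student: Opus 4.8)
The plan is to exploit the characterization \rminus{<//-} of the order in commutative wBCK-algebras together with the isotonicity law \rminus{miso} available in qBCK-algebras. First I would set $v := x \vee y$, so that $x \le v$ and $y \le v$. By Theorem \ref{commiff}\rminus{<//-}, from $y \le v$ we get $y = v - w$ for some element $w$, and then by \rminus{mtri} (or directly by \rminus{</-}) we have $v - .v - y = y$. Thus it suffices to show $v - x \le v - .v - y$, i.e.\ that $v - x$ is bounded above by $y$ rewritten in the canonical form $v - .v - y$.

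The key step is to compare $v - x$ and $v - .v - y$ by comparing their ``complements'' $v - y$ and $x$ inside the section $[0,v]$, using antitonicity \rminus{manti}: if I can show $v - y \le x$, then \rminus{manti} gives $v - x \le v - .v - y = y$ and we are done. To get $v - y \le x$: since $x \le v$ and $y \le v$, and in a \emph{commutative} wBCK-algebra meets exist (Corollary \ref{wed/-}), one expects $v - y$ and $v - x$ to behave like orthocomplements of $x$ and $y$ in the section $[0,v]$. The natural route is to observe $v - y \le v$ by \rminus{xyx}, so again by \rminus{<//-} write $v - y = v - t$; then apply \rminus{miso} to the inequality $x \le v$ to obtain $x - y \le v - y$, and — using that $v$ is an \emph{upper} bound of $x$, hence $x - v = 0$ — leverage \rminus{sgalois}/\rminus{salg} to pin down $v - y$ below $x$. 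Concretely, $v - y \le z$ should be derivable with $z$ chosen so that \rminus{sgalois} turns it into $v - z \le y$, and then isotonicity plus the join property forces $z \ge x$.

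The main obstacle I anticipate is precisely making the inequality $v - y \le x$ rigorous without circularity: the join $x \vee y$ is only a \emph{poset} join, not a lattice-theoretic complement, so one cannot simply invoke orthocomplementation identities — those are exactly what Section \ref{struct} is building toward. The honest argument must go through \rminus{miso} applied to $y \le x \vee y$, giving $y - x \le (x\vee y) - x$, combined with the commutative identity \rminus{mcommut} and \rminus{mdi2}: one has $(x\vee y) - x \le x$ is false in general, so instead I would show $(x \vee y) - x \le (x \vee y) - .(x\vee y) - y$ by verifying $(x\vee y) - y \le x$, and the latter follows by symmetry of the roles of $x,y$ together with the fact that $(x\vee y) - y$ is, by \rminus{xyx} and \rminus{salg}, an element $\le x \vee y$ which is disjoint enough from $y$. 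If a clean chain resists me, the fallback is to pass to the section $[0, x\vee y]$, where \rminus{miso} makes $^+_{x\vee y}$ an order-reversing map, apply it to $x \le x \vee y$ and $y \le x\vee y$, and read off the desired inequality from the De Morgan-type behaviour of $^+_{x\vee y}$ on the join — i.e.\ $(x\vee y)^+_{x\vee y} = 0$ forces $(x\vee y - x) \wedge (x \vee y - y) = (x\vee y) - (x\vee y) = 0$ is not quite it either, so the cleanest is: $x \vee y - x \le x\vee y - .x\vee y - (x\vee y - x)$ and then identify $x \vee y - (x\vee y - x) = x \wedge (x\vee y) $ via \rminus{miso}-strengthened \rminus{mtri}, i.e.\ $= x$, whence $x\vee y - x \le x\vee y - x$ — tautological, signalling that the real content is the single application of \rminus{miso} to $y \le x \vee y$ giving $x\vee y - (x\vee y) \le x\vee y - x$ is vacuous, so one truly needs $y \le x$ replaced throughout, confirming that the crux is extracting $v - y \le x$ from isotonicity alone.
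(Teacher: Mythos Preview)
Your proposal does not contain a proof; it contains a correct reduction followed by circular reasoning. You observe rightly that, with $v := x \vee y$, it would suffice to show $v - y \le x$, since then \rminus{manti} and \rminus{</-} give $v - x \le v - (v - y) = y$. But $v - y \le x$ is precisely the statement $(x \vee y) - y \le x$, i.e., the very claim you are trying to prove with $x$ and $y$ interchanged. Appealing to ``symmetry of the roles of $x,y$'' therefore closes no gap: you have reduced the problem to itself. The subsequent attempts (applying \rminus{miso} to $y \le v$, passing to the section $[0,v]$, invoking \rminus{sgalois} with an unspecified $z$) all either go in the wrong direction or end in the same circularity, as you yourself note at the end.

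The idea you are missing, and which the paper uses, is to exploit the join property \emph{once} to force $(v - x) \wedge (v - y) = 0$. Concretely: set $w := (v - x) \wedge (v - y)$. From $w \le v - x$ and $w \le v - y$ one gets, by \rminus{manti} and \rminus{</-}, that $x = v - (v - x) \le v - w$ and $y = v - (v - y) \le v - w$; since $v = x \vee y$, this forces $v \le v - w$, hence $v - w = v$ and $w = v - (v - w) = v - v = 0$. Now the isotonicity axiom \rminus{miso} is used just once: from $v - x \le v$ it gives $(v - x) - (v - y) \le v - (v - y) \le y$; but by (\ref{minus}) the left side equals $(v - x) - w = (v - x) - 0 = v - x$. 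This is the step your plan never reaches: the join hypothesis is consumed not in a direct comparison of $v - y$ with $x$, but in proving that $v - x$ and $v - y$ are disjoint.
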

\begin{proof}
Assume that $A$ is a commutative qBCK-algebra, and suppose that $p:= x \vee y$ exists in $A$ for some $x$ and $y$. Since $p - x \le p$, we have that
$p - x. - .p - y \le p - .p - y \le y$. By (\ref{minus}), then $p - x. - (p - x. \wedge .p - y) \le y$. But $p - x. \wedge .p - y \le p - x, p - y$, whence $x,y \le p - (p - x. \wedge .p - y)$ (by \rminus{manti} and \rminus{midem}) and, further, $p \le p - (p - x. \wedge .p - y)$. Thus $(p - x. \wedge .p - y) = p - . p - (p - x. \wedge .p - y) = 0$ (see \rminus{</-} and \rminus{xyx}), and eventually $p - x \le y$.
\end{proof}

\subsection{Uniformity}

Let us consider a collection of equivalent conditions on commutative wBCK-algebras.

\begin{lemm} \label{equi:uni}
The following assertions are equivalent in any commutative wBCK-algebra:
\begin{minusenum}
\item   \label{Ippo`}
if $x \le p \le q$, then $p - x = p - \di p - .q - x$,
\item   \label{compm}
if $x \le p \le q$, then $p - .q - x = x$,
\item   \label{Contr}
if $z \le y$, then $x - z. - y = x - y$,
\item  \label{abb}
$x - .y - z\di - y = x - y$.
\end{minusenum}
\end{lemm}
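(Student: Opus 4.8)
The plan is to prove the chain of implications $\rminus{Ippo`}\Rightarrow\rminus{compm}\Rightarrow\rminus{Contr}\Rightarrow\rminus{abb}\Rightarrow\rminus{Ippo`}$, which is the most economical cycle given the tools available. Throughout I would work in a fixed commutative wBCK-algebra and freely use the characterizations from Theorem~\ref{commiff}, especially $\rminus{</-}$ and $\rminus{<//-}$, together with Proposition~\ref{wBCK-prop} and the meet formula from Corollary~\ref{wed/-}.

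For $\rminus{Ippo`}\Rightarrow\rminus{compm}$: assume $x\le p\le q$. From $\rminus{</-}$ (applied to $x\le q$) we get $q-.q-x = x$, so $p-.q-x = p-\di p-.(q-.q-x)$ is not quite what we want, but $\rminus{Ippo`}$ gives $p-x = p-\di p-.q-x$, and since $x\le p$ the left side is, by $\rminus{</-}$ again, tied to $x$ via $p-.p-x=x$; applying $p-(\cdot)$ to both sides of $\rminus{Ippo`}$ and using $\rminus{mtri}$ to collapse the triple subtraction should yield $p-.q-x=x$. For $\rminus{compm}\Rightarrow\rminus{Contr}$: given $z\le y$, use $\rminus{xyx}$ to note $x-z\le x$ and $x-z\le x - z$; the element $x-z$ lies below $x$, and one wants to introduce $y$. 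Here I would pass through the meet: by $(\ref{minus})$, $x-y = x-.x\wedge y$ and $x-z.-y = (x-z)-.(x-z)\wedge y$, so it suffices to show $(x-z)\wedge y = (x\wedge y) \wedge$ something, or more directly that $x-z.-y \le x-y$ and $x-y\le x-z.-y$. The second inequality is $\rminus{manti}$ since $z\le y$ gives $x-y\le x-z$, hence $x-y = (x-y)-0 \le \dots$; actually $x-y\le x-z$ and then $x-z.-y$: apply $\rminus{manti}$ with the roles arranged so that subtracting $y$ from the larger $x-z$... this needs care, so I expect to route it through $\rminus{compm}$ applied to a suitable triple $x\wedge z \le x\wedge y \le x$. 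The implication $\rminus{Contr}\Rightarrow\rminus{abb}$ is immediate: in $x-.y-z.-y = x-y$ take the instance of $\rminus{Contr}$ with the inner term $y-z\le y$ (which holds by $\rminus{xyx}$) playing the role of $z$, giving $x-(y-z).-y = x-y$ directly.

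The genuinely substantive step is $\rminus{abb}\Rightarrow\rminus{Ippo`}$, and I expect this to be the main obstacle. Suppose $x\le p\le q$. I want $p-x = p-\di p-.q-x$. Since $x\le q$, we have $q-.q-x = x$ by $\rminus{</-}$. Now apply $\rminus{abb}$ with the substitution $x:=p$, and with the inner "$y-z$" engineered to equal $q-x$: concretely, since $q-x = q-x$ and $x\le q$, one hopes to write $q-x$ in the form "$y-z$" and then $\rminus{abb}$ reads $p-.(q-x).-y = p-y$ for the appropriate $y$. The natural choice is $y:=q$, $z:=x$, so $\rminus{abb}$ gives $p-.q-x.-q = p-q$. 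That is not yet $\rminus{Ippo`}$, so instead I would try $y:= p-x$ or exploit that $p\le q$ means $q-.q-p = p$; substituting into $\rminus{abb}$ with $x:=p$, $y:=p$, $z$ chosen so $p-z$ relates to $q-x$. The cleanest route is probably: in $\rminus{abb}$ put $x:=p$, $y:=p-x.-?$ — I would experiment with making the "$-y$" on the outside be "$-(p-x)$" so that $\rminus{abb}$ collapses a nested subtraction by $p-x$ on an expression involving $q-x$, and then use $\rminus{mtri}$ and $\rminus{manti}$ together with $x\le p$ to identify both sides with $p-x$. Once the correct substitution is found the verification should be a short dots-calculation using $\rminus{mdi2}$, $\rminus{manti}$ and $\rminus{</-}$; finding that substitution is the crux.
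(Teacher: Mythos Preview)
Your cycle $\rminus{Ippo`}\Rightarrow\rminus{compm}\Rightarrow\rminus{Contr}\Rightarrow\rminus{abb}\Rightarrow\rminus{Ippo`}$ is a reasonable plan, and the first and third implications are exactly as in the paper. But the last link, $\rminus{abb}\Rightarrow\rminus{Ippo`}$, which you correctly identify as the crux and do not manage to close, is the wrong place to close the cycle. The paper never attempts it. Instead it proves the much easier $\rminus{abb}\Rightarrow\rminus{Contr}$: if $z\le y$, then by $\rminus{</-}$ one has $z = y - (y - z)$, so $(x - z) - y = (x - (y - (y - z))) - y$, and now $\rminus{abb}$ (with $y-z$ in the role of the inner variable) gives this equal to $x - y$ in one line. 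You were circling this when you noted $y-z\le y$, but you used that observation only for $\rminus{Contr}\Rightarrow\rminus{abb}$ and not for the converse. Once you have $\rminus{Contr}$, getting back to $\rminus{compm}$ is also short: for $x\le p\le q$ one has $q-p\le q-x$ by $\rminus{manti}$, so $\rminus{Contr}$ applied to the triple $(q,\,q-p,\,q-x)$ yields $(q-(q-p))-(q-x)=q-(q-x)$, i.e.\ $p-(q-x)=x$ by $\rminus{</-}$. And $\rminus{compm}\Rightarrow\rminus{Ippo`}$ is immediate by substituting $p-(q-x)=x$ into the right-hand side.

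There is a second, smaller gap in your $\rminus{compm}\Rightarrow\rminus{Contr}$ step: the triple you suggest, $x\wedge z\le x\wedge y\le x$, does not lead anywhere useful. The paper's choice is $x-y\le x-z\le x$ (which holds by $\rminus{manti}$ and $\rminus{xyx}$ once $z\le y$); applying $\rminus{compm}$ to \emph{that} triple gives $(x-z)-(x-(x-y))=x-y$, and then one checks $(x-z)-y = (x-z)-(x-(x-y))$ by a two-sided inequality using $\rminus{mdi2}$, $\rminus{manti}$, the meet formula, and $\rminus{sgalois}$. So both of your unfinished links are handled in the paper by picking the right instance rather than by a long calculation; the moral is that in a commutative wBCK-algebra the rewrite $z = y-(y-z)$ whenever $z\le y$ is the tool that makes $\rminus{abb}$ and $\rminus{Contr}$ interchangeable.
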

\begin{proof}
Let $A$ be a commutative wBCK-algebra.

(i) It is easily seen that \rminus{Ippo`} is equivalent to \rminus{compm}. Suppose that $x \le p$. The equation $p - x = p - \di p - .q - x$ implies that $p - (p - x) = p - (p - \di p - .q - x)$, i.e.,  $x = p - .q - x\,$: see \rminus{mtri} and \rminus{</-}. Clearly, the converse implication also holds.

(ii) Further, \rminus{Contr} is equivalent to \rminus{compm}.
Suppose that $z \le y$. By \rminus{manti} and \rminus{xyx}, $x - y \le x - z \le x$. Then \rminus{compm} implies that $x - z. - \di x - .x - y = x - y$. But $x - z. - y \le x - z. - \di x - .x - y$ by \rminus{mdi2} and \rminus{manti}. On the other hand, $x - z. \wedge y \le x \wedge y$, i.e., $x - z. - \di x - z. - y \le x - .x - y$; then the exchange rule \rminus{sgalois} leads us to the reverse
$x - z. - \di x - .x - y \le x - z. - y$ of the inequality just proved.  Eventually, $x - z. - y = x - y$. Now suppose that $x \le p \le q$. Then $q - p \le q - x$ by \rminus{manti}, and then \rminus{Contr} and \rminus{</-} imply that $x = q - .q - x = q - .q - p\di - .q - x =  p - .q - x$.

(iii) In virtue of \rminus{xyx}, the equation \rminus{abb} is a particular case of \rminus{Contr} with $z := y - z$. Conversely, suppose that $z \le y$.
Applying \rminus{</-} and \rminus{abb}, then $(x - z) - y = (x - \di y - .y - z) - y = x - y$.
\end{proof}

Observe that \rminus{Ippo`} can be rewritten as
\begin{equation}    \label{Ippo}
\mbox{if $x \le p \le q$, then $p - x = p \wedge .q - x$} \enspace .
\end{equation}
Let us call a commutative wBCK-algebra \emph{uniform} \cite{wbck3} if it satisfies any of the conditions listed in the lemma.
For instance, the wBCK-algebra $\Ov$ from Example \ref{om6} is uniform.
 Due to \rminus{abb}, the class of uniform commutative wBCK-algebras also is a variety; we shall return to it in Section \ref{uoimpl}.

\section{Orthoimplicative wBCK-algebras}
\label{oimpl}

Theorem 10 in \cite{bck2} says that a BCK-algebra satisfying \rminus{pierce} is both commutative and positively implicative; the converse also holds. Though the BCK-axiom \rminus{mAnti} is used in the proofs of these results, they remain  valid also in wBCK-algebras.

\begin{lemm} \label{posimpl}
The Pierce law \textup{\rminus{pierce}}, the contraction law \textup{\rminus{mcontr}}
and the contraction rule \textup{\rminus{mcontr-}}
are equivalent in commutative wBCK-algebras.
\end{lemm}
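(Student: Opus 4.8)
The plan is to establish the cycle of implications $\rminus{pierce}\Rightarrow\rminus{mcontr}\Rightarrow\rminus{mcontr-}\Rightarrow\rminus{pierce}$, of which the first two links are already available for \emph{arbitrary} wBCK-algebras. It was noted in Section~\ref{prelim} that applying the Pierce law twice yields $x - y. - y = x - y$, so $\rminus{pierce}\Rightarrow\rminus{mcontr}$; and if $x - y \le y$, then $x - y. - y = 0$ by \rminus{salg} while $x - y. - y = x - y$ by \rminus{mcontr}, whence $x - y = 0$, i.e.\ $x \le y$, so $\rminus{mcontr}\Rightarrow\rminus{mcontr-}$. Thus the only implication that genuinely uses commutativity is $\rminus{mcontr-}\Rightarrow\rminus{pierce}$, and that is where the work lies.

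To prove it I would use the fact recorded in Section~\ref{prelim} that in every wBCK-algebra the Pierce law is equivalent to its consequence \rminus{pierce'}. So, assuming \rminus{mcontr-}, suppose $x \le y - x$; we must show $x = 0$. Since $y - x \le y$ by \rminus{xyx}, transitivity gives $x \le y$. Applying \rminus{manti} to the hypothesis $x \le y - x$ produces $y - .y - x \le y - x$; the contraction rule \rminus{mcontr-} then gives $y \le y - x$, and together with $y - x \le y$ (again \rminus{xyx}) and antisymmetry this forces $y = y - x$, hence $y - .y - x = y - y = 0$ by \rminus{xx0}. On the other hand, from $x \le y$ and commutativity in the form \rminus{</-} we get $y - .y - x = x$. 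Comparing the two evaluations of $y - .y - x$ yields $x = 0$, as required.

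The argument is short, and the one non-routine move is precisely the step that makes the contraction rule applicable: feeding the hypothesis $x \le y - x$ through \rminus{manti} to obtain $y - .y - x \le y - x$ --- an inequality of the form ``$a - b \le b$'' with $a := y$ and $b := y - x$ --- so that \rminus{mcontr-} can be invoked. Everything else is routine bookkeeping with \rminus{xyx}, \rminus{xx0}, antisymmetry and the characterization \rminus{</-} of commutativity. One should also verify along the way that the BCK-axiom \rminus{mAnti} never enters, since the classical BCK proof of the analogous statement (Theorem~10 of \cite{bck2}) does rely on it; the route sketched above does not, which is what allows the equivalence to persist in the weak setting.
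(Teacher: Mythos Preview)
Your proof is correct and follows essentially the same route as the paper's own argument: both reduce to showing \rminus{mcontr-}$\Rightarrow$\rminus{pierce'}, both use \rminus{manti} on the hypothesis $x\le y-x$ to obtain $y-.y-x\le y-x$, invoke the contraction rule to get $y\le y-x$, and then combine this with \rminus{</-} (from $x\le y$) to conclude $x=0$. The only cosmetic difference is that you pass through antisymmetry to write $y=y-x$ and hence $y-.y-x=y-y=0$, whereas the paper reads $y-.y-x=0$ directly from $y\le y-x$ via \rminus{salg}.
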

\begin{proof}
We already observed in Section \ref{prelim} that \rminus{pierce} implies \rminus{mcontr} and that \rminus{mcontr} implies \rminus{mcontr-}.
It remains to show that \rminus{pierce} follows from \rminus{mcontr-}. Actually, we shall derive \rminus{pierce'}.

So, assume \rminus{mcontr-}, and suppose that $x \le y - x$ (hence, $x \le y$). Then \rminus{manti} implies that $y - .y - x \le  y - x$, whence $y \le y - x$. By \rminus{</-} and \rminus{salg}, now $x = y - .y - x = 0$, as needed.
\end{proof}

\begin{theo}    \label{piercom}
A wBCK-algebra that satisfies the Pierce law is commutative.
\end{theo}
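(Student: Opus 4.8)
The plan is to show that the Pierce law \rminus{pierce} forces the commutativity identity \rminus{mcommut}, and by Theorem \ref{commiff} it suffices to verify the weaker condition \rminus{midem}: whenever $x \le y$, we have $x \le y - \di y - .y - x$, i.e.\ $x \le y - .y - x$ (recall $x \le y$ gives $y - .y - x \le x$ by \rminus{mdi1}, \rminus{mdi2}, so in fact we are after the equality $y - .y - x = x$, which is \rminus{</-}). So fix $x \le y$ and set $u := y - x$; the goal becomes $y - u = x$, or equivalently (via \rminus{salg}) the two inequalities $y - u \le x$ and $x \le y - u$.

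For the inequality $y - u \le x$: by \rminus{mdi2} applied with the pair $y, x$ we already have $y - .y - x \le x$, that is $y - u \le x$ — no use of Pierce needed here. The content is the reverse inequality $x \le y - u = y - .y - x$. Here I would bring in the Pierce law. Note that by \rminus{manti} (antitonicity in the second argument) and $x \le y$ we get $y - y \le y - x$, i.e.\ $0 \le y - x = u$, which is vacuous; more usefully, from $x \le y$ and \rminus{xyx} we get $u = y - x \le y$. The key move is to exploit the consequence (\ref{wm0}) of the Pierce law established just before Section \ref{commw}, namely $x \wedge (y - x) = 0$ for all $x, y$ — wait, that is the wrong pairing. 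Let me instead use the form \rminus{pierce'}: if $v \le w - v$ then $v = 0$. I want to produce an element that is below $y - u$ minus something, and squeeze.

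Concretely: consider $z := x - .y - u$. I claim $z = 0$, which by \rminus{salg} is exactly $x \le y - u$, finishing the proof. To see $z = 0$, I would try to show $z \le w - z$ for a suitable $w$ and invoke \rminus{pierce'}. A natural candidate is $w := u = y - x$. Using \rminus{sgalois} (the Galois/exchange axiom \rminus{sgalois}) on the definition of $z$: from $z = x - .y - u \le x$ (by \rminus{xyx}) we also get, by \rminus{sgalois} applied to $z \le x - .y-u$ rearranged, that $x - z \le y - u$ ... more carefully, $z = x - (y-u)$ combined with \rminus{mdi2} gives $x - z = x - \di x - .y-u \le y - u$; and then by \rminus{manti} on $u = y - x \ge$ nothing obvious. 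The cleanest route is probably the one already used in the excerpt for the analogous Lemma \ref{posimpl} and the $\Nv$-style table reasoning: apply \rminus{pierce} literally twice. Since \rminus{pierce} is an identity, substitute to get $x - \di (y - u) - x = x$ and analyse $(y-u)-x$ using \rminus{manti} and $u \le y$.

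The main obstacle I anticipate is the bookkeeping of which of the primitive facts \rminus{mdi1}, \rminus{mdi2}, \rminus{manti}, \rminus{xyx}, \rminus{x0x}, \rminus{salg}, \rminus{sgalois} to chain together to pass from the Pierce identity to $x \le y - .y - x$ — there is no commutativity available yet, so I cannot use meets freely, and I must stay within the ``weak'' toolkit. My expectation is that the proof is short: establish $x \le y$ $\Rightarrow$ $y - x \le y$ (by \rminus{xyx}), then $y - .y-x \ge y - y = 0$ is useless, so instead use \rminus{manti} with $y - x \le y$ to get $y - y \le y - .y - x$ — still trivial — hence the real leverage must be \rminus{pierce} or its consequence (\ref{wm0}) used with the pair $(y - .y - x,\ ?)$; I would set up (\ref{wm0}) as $v \wedge (w - v) = 0$ with $v := x$ and $w$ chosen so that $w - x = $ something $\ge y - .y - x$ is forced, then conclude via \rminus{h2}. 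Pinning down that $w$ (likely $w := y$, giving $y - x = u$ and $x \wedge u = 0$, whence $x - u = x$ by (\ref{h2}), and then relating $x - u$ to $y - u$) is the step that needs care, and it is where I expect the proof's single real idea to sit.
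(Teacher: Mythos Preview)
Your proposal is not a proof but a sketch with an acknowledged gap, and that gap is genuine. The crux is the step you flag at the end: from $x \le y$, $u := y - x$, and $x - u = x$ (which you correctly obtain via (\ref{wm0}) and (\ref{h2})), you want to pass to $x \le y - u$. The natural move would be isotonicity \rminus{miso} ($x \le y \Rightarrow x - u \le y - u$), but \rminus{miso} is \emph{not} available in a wBCK-algebra, and the Pierce law does \emph{not} supply it: the algebra $\Ov$ of Example~\ref{om6} satisfies \rminus{pierce} yet violates \rminus{miso}. Your alternative attempt via \rminus{pierce'}, taking $z := x - (y - u)$ and seeking $w$ with $z \le w - z$, does not close either; with $w = u$ there is no reason for $x - (y-u) \le (y-x) - z$ to hold in the weak toolkit. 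So the direct equational route you are pursuing is blocked exactly where you suspected.

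The paper's proof takes a completely different tack: it argues by contradiction. Assuming $a \le p$ with $p - (p - a) < a$ strictly, it sets $b := p - a$, $c := p - b$, then $d := a - c$, $e := p - d$, $f := p - e$, and uses (\ref{wm0}) repeatedly to force $c \wedge d = 0$ while also $0 < f \le c$ and $f \le d$, a contradiction. The single real idea is this descending construction inside $[0,p]$, not a one-line equational manipulation; your expectation that the proof is ``short'' and sits in one clever substitution is what led you astray.
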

\begin{proof}
Suppose that there is a counterexample---a wBCK-algebra $A$, in which (\ref{wm0}) holds, but \rminus{</-} fails to be true. Then there are elements $p$ and $a$ in $A$ such that $a \le p$, and $p - .p - a < a$. Let $b := p - a$ and $c := p - b$; then $c < a$, $p - c = p - a = b$, $a \wedge b = 0 = b \wedge c$ and $0 < a,b,c < p$. See the black dots in the diagram
\begin{center}
\setlength\unitlength{.5cm}
\begin{picture}(8,7)(3,2.4) 
\put(7,3){\line(-1,1){3}}
\put(4,6){\line(1,1){1.4}}
\put(5.6,7.55){\line(1,1){1.4}}
\put(7,9){\line(3,-2){3}}
\put(10,7){\line(0,-1){2}}
\put(10,5){\line(-3,-2){3}}
\put(7,3){\line(0,1){1.9}}
\put(7.1,5.1){\line(3,2){2.9}}
\put(7,3){\circle*{.2}}
\put(4,6){\circle*{.2}}
\put(5.5,7.5){\circle{.2}}
\put(7,9){\circle*{.2}}
\put(10,7){\circle*{.2}}
\put(10,5){\circle*{.2}}
\put(7,5){\circle{.2}}
\put(7,2.5){\makebox(0,0){\small $0$}}
\put(3.5,6.1){\makebox(-2.4,0){\small $b := p - a$}}
\put(3.5,5.5){\makebox(-1.7,0){\small $= p - c$}}
\put(5.1,7.6){\makebox(-2.6,0){\small $e := p - d$}}
\put(7,9.5){\makebox(0,0){\small $p$}}
\put(10.5,7){\makebox(0,0){\small $a$}}
\put(10.5,5){\makebox(2.6,0){\small $c := p - b$}}
\put(7,5.6){\makebox(1.4,-0.2){\small $d :=\quad  a - c$}}
\end{picture}
\end{center}

Let, further, $d := a - c$; so, $c \wedge d = 0$ and $0 < d < a$. This implies that,  for $e := p - d$, likewise  $d \wedge e = 0$ and $b < e < p$ (as $p -a \le p - d$). For $f := p - e$, further  $e \wedge f = 0$, $0 < f \le c$ (as $p - e \le p - b$) and $f = p - .p - d \le d$. This contradicts to the above equality $c \wedge d = 0$.
\end{proof}

BCK-algebras satisfying the Pierce law are commonly called implicative; however, we reserve this attribute for more specific wBCK-algebras (see the next section).

\begin{defi}	\label{oi:defi}
A wBCK-algebra is said to be \emph{orthoimplicative} if it satisfies \rminus{pierce}, i.e., is  commutative and positive implicative.
\end{defi}

The algebra $\Ov$ from the Example \ref{om6} is an instance of an orthoimplicative wBCK-algebra that is not a qBCK-algebra and, hence, a BCK-algebra. The next theorem shows that orthoimplicative qBCK-algebras are of little interest for our purposes in this paper.

\begin{theo} \label{oiq}
Every orthoimplicative qBCK-algebra is a BCK-algebra.
\end{theo}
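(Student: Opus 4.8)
The plan is to reduce everything to Theorem~\ref{bck(w)}, which says that a wBCK-algebra is a BCK-algebra exactly when it validates the isotonicity law \rminus{miso} and the exchange law \rminus{exch}. A qBCK-algebra satisfies \rminus{miso} by definition, so the only thing left to prove is that \rminus{exch}, i.e.\ $(x - y) - z = (x - z) - y$, holds in every orthoimplicative qBCK-algebra. I shall use freely that such an algebra is commutative, hence a meet semilattice with $u \wedge v = u - (u - v)$ (Corollary~\ref{wed/-}), and that orthoimplicativity gives the Pierce law, equivalently the identity \textup{(\ref{wm0})}: $u \wedge (v - u) = 0$ for all $u, v$.

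First I would fix $x, y, z$, put $w := (x - y) \wedge (x - z)$, and show that both $(x - y) - z$ and $(x - z) - y$ equal $w$. The easy direction is $\le$: from $x - y \le x$ and $x - z \le x$ (by \rminus{xyx}) together with \rminus{miso} one gets $(x - y) - z \le x - z$ and $(x - z) - y \le x - y$, which with \rminus{xyx} yield $(x - y) - z \le w$ and $(x - z) - y \le w$. For the reverse inequality $w \le (x - y) - z$, I would bound $w - ((x - y) - z)$ from above in two ways: since $w \le x - y$, \rminus{miso} and Corollary~\ref{wed/-} give $w - ((x - y) - z) \le (x - y) - ((x - y) - z) = (x - y) \wedge z$, while \rminus{xyx} gives $w - ((x - y) - z) \le w$; hence $w - ((x - y) - z) \le ((x - y) \wedge z) \wedge w \le z \wedge w$. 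But $w \le x - z$, so $z \wedge w \le z \wedge (x - z) = 0$ by \textup{(\ref{wm0})}. Thus $w - ((x - y) - z) = 0$, i.e.\ $w \le (x - y) - z$ by \rminus{salg}, so $(x - y) - z = w$. The computation for $(x - z) - y$ is the mirror image, using $w \le x - z$ and then $w \le x - y$ and the instance $y \wedge (x - y) = 0$ of \textup{(\ref{wm0})}.

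Combining the two results gives $(x - y) - z = w = (x - z) - y$, which is exactly \rminus{exch}; together with \rminus{miso} this makes the algebra a BCK-algebra by Theorem~\ref{bck(w)}. I do not anticipate a real obstacle; the only thing needing attention is to keep the orientation of the monotonicity steps straight (the antitone and isotone behaviour of $-$, and monotonicity of $\wedge$) and to notice that the single identity \textup{(\ref{wm0})} contributed by orthoimplicativity is precisely what forces each of the meets $z \wedge w$ and $y \wedge w$ down to $0$. One could instead first try to derive the right distributive law \rminus{mdistr} and then invoke the derivation of \rminus{exch} from it recorded after Example~\ref{posim}, but the direct argument above is shorter.
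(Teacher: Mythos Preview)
Your argument is correct. Both proofs reduce the claim to verifying \rminus{exch} via Theorem~\ref{bck(w)}, but the paper establishes $(x-y)-z \le (x-z)-y$ directly (and then appeals to symmetry): from \textup{(\ref{wm0})} and \rminus{xyx} one gets $((x-y)-z)\wedge y = 0$, which by the meet formula yields $(x-y)-z \le ((x-y)-z)-y$; then \rminus{xyx} and \rminus{miso} give $((x-y)-z)-y \le (x-z)-y$. Your route instead identifies the common value as $w = (x-y)\wedge(x-z)$ and proves each side equals $w$. The paper's proof is a couple of lines shorter, while yours has the minor conceptual payoff of exhibiting an explicit closed form for $(x-y)-z$ in orthoimplicative qBCK-algebras; the underlying uses of \rminus{miso}, \rminus{xyx}, Corollary~\ref{wed/-}, and the single instance of \textup{(\ref{wm0})} are the same in both.
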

\begin{proof}
By virtue of Theorem \ref{bck(w)}, it suffices to prove that the exchange law \textup{\rminus{exch}} is fulfilled in any  orthoimplicative qBCK-algebra $A$.

Since $A$ satisfies the Pierce law, we conclude from (\ref{wm0}) that $x - y. - z\di \wedge y \le x - y. \wedge y = 0$. Since $A$ is also commutative, further $(x - y. - z) - .(x - y. -z) - y = 0$, i.e., $x - y. - z \le x - y. - z\di - y$. On the other hand, \rminus{xyx} together with \rminus{miso} imply that $x - y. - z\di - y \le x - z. - y$. Thus, $x - y. - z \le x - z. - y$. The reverse inequality follows by symmetry.
\end{proof}

Since commutative wBCK-algebras form a variety, so do also orthoimplicative wBCK-algebras. Notice that \rminus{pw} is a particular case of the Pierce law \rminus{pierce}. Due to Theorem \ref{comm:equ'}, this observation leads us to an economical axiom system for orthoimplicative wBCK-algebras.

\begin{prop}	\label{oi:equ}
An algebra $(A,-,0)$ is an orthoimplicative wBCK-algebra w.r.t.\ to the relation $\le$ defined by \textup{\rminus{salg}} if and only if it satisfies \textup{\rminus{pierce}, \rminus{mcommut}} and \textup{\rminus{c2}}.
\end{prop}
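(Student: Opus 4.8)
The plan is to reduce everything to Theorem~\ref{comm:equ'}, exploiting the remark made just before the proposition that \rminus{pw} is a particular case of the Pierce law: the identity $x -\di x - y. - x = x$ is exactly what \rminus{pierce} becomes once $y$ is replaced by $x - y$, so any algebra in which \rminus{pierce} holds also satisfies \rminus{pw}.

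For the ``only if'' direction I would argue as follows. An orthoimplicative wBCK-algebra satisfies \rminus{pierce} by Definition~\ref{oi:defi}. As a wBCK-algebra it validates \rminus{mdi1} and \rminus{manti} (Proposition~\ref{wBCK-prop}), and the short computation from the proof of Theorem~\ref{comm:equ} then yields \rminus{c2}. Commutativity, hence \rminus{mcommut}, follows from Theorem~\ref{piercom}.

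For the ``if'' direction, assume the three identities hold and let $\le$ be the relation defined by \rminus{salg}. By the opening observation \rminus{pw} holds as well, so the algebra satisfies the triple \rminus{mcommut}, \rminus{c2}, \rminus{pw}; Theorem~\ref{comm:equ'} then makes it a commutative wBCK-algebra relative to $\le$, and since \rminus{pierce} holds it is orthoimplicative by Definition~\ref{oi:defi}.

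I do not expect a genuine obstacle here: the argument is essentially a bookkeeping exercise in matching hypotheses. The only points that need care are to keep the relation $\le$ throughout equal to the one defined by \rminus{salg}, so that the conclusion of Theorem~\ref{comm:equ'} feeds correctly into Definition~\ref{oi:defi}, and to perform the \rminus{pierce}-to-\rminus{pw} substitution in the correct variable.
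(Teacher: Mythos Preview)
Your proposal is correct and follows exactly the approach the paper intends: the paper does not give a separate proof of Proposition~\ref{oi:equ} but derives it from the sentence immediately preceding it, namely that \rminus{pw} is an instance of \rminus{pierce} and hence Theorem~\ref{comm:equ'} applies. Your write-up simply unpacks that remark into the two directions, which is fine; one minor simplification is that in the ``only if'' direction you need not invoke Theorem~\ref{piercom} separately, since Definition~\ref{oi:defi} already records that an orthoimplicative wBCK-algebra is commutative.
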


These axioms form a system which, up to minor unessential changes, is dual to the system of axioms for orthoimplication algebras of \cite{Ch4} \label{rf3} and \cite[subsection 6.2.4]{ChHK2} (these differ from orthoimplication algebras of \cite{Abb2}). Ortho-algebras in the sense of \cite{Ch5} is the same class of algebras. Implication orthoalgebras discussed in \cite{ChH2} have some additional axioms, which are in fact redundant. So, all these classes of  algebras may be identified with that of orthoimplicative wBCK*-algebras.

\section{Implicative wBCK-algebras}
\label{uoimpl}

The contraction law \rminus{mcontr} is a particular case (with $z = y$) of \rminus{Contr}. We thus may consider, in the context of commutative wBCK-algebras, uniformity as a strengthening of the property ``being positive implicative''. Lemma \ref{posimpl} then implies that a commutative and uniform wBCK-algebra is orthoimplicative.
Notice also that \rminus{midem} is a consequence of \rminus{compm}; so, a weak BCK algebra is commutative and uniform if and only if it satisfies \textup{\rminus{compm}}.
These observations give rise to the following definition.

\begin{defi}
A wBCK-algebra is said to be
\emph{implicative} if it satisfies \rminus{compm}, i.e., is commutative and uniform. \end{defi}

The next theorem presents several conditions that are necessary and sufficient for an orthoimplicative wBCK-algebra to be implicative; each of the three latter ones is ``a half'' of a condition  from Lemma \ref{equi:uni}. Notice that \rminus{Ippo'} is subsumed also under the isotonicity law \rminus{miso}.

\begin{theo}   \label{semiunif}
A wBCK-algebra is implicative if and only if it is orthoimplicative and satisfies any of the (equivalent) conditions
\begin{minusenum}
\item   \label{Ippo'}
if $x \le p \le q$, then $p - x \le q - x$,
\item   \label{compm'}
if $x \le p \le q$, then $p - .q - x \le x$,
\item   \label{Contr'}
if $z \le y$, then $x - z. - y \le x - y$,
\item    \label{abb'}
$x - .y - z\di - y \le x - y$.
\end{minusenum}
\end{theo}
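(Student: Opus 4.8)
The plan is to proceed in three stages.

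\emph{Stage 1.} First I would establish that \rminus{Ippo'}, \rminus{compm'}, \rminus{Contr'} and \rminus{abb'} are equivalent in every commutative wBCK-algebra, by rerunning the argument of Lemma~\ref{equi:uni} with its equalities weakened to the appropriate inequalities. Thus \rminus{compm'} yields \rminus{Ippo'} at once by the exchange rule \rminus{sgalois} (from $p - .q - x \le x$ infer $p - x \le q - x$), and conversely \rminus{Ippo'} yields \rminus{compm'} by antitonicity \rminus{manti} together with the identity $p - .p - x = x$ valid for $x \le p$ (see \rminus{</-}); \rminus{abb'} is the instance of \rminus{Contr'} obtained by replacing $z$ with $y - z$ (using \rminus{xyx} and \rminus{</-}), and the converse is immediate; and \rminus{compm'} $\Leftrightarrow$ \rminus{Contr'} follows from the two chains of inequalities used in part~(ii) of that lemma, now read with $\le$ in place of $=$. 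I expect Stage~1 to be routine bookkeeping.

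\emph{Stage 2.} The ``only if'' part is then immediate: if $A$ is implicative it is commutative and uniform, hence orthoimplicative (as was observed just before the definition of implicative wBCK-algebras, via Lemma~\ref{posimpl}), and each of \rminus{Ippo'}, \rminus{compm'}, \rminus{Contr'}, \rminus{abb'} is merely the ``$\le$-half'' of the corresponding equality among \rminus{Ippo`}, \rminus{compm}, \rminus{Contr}, \rminus{abb} of Lemma~\ref{equi:uni}, so it holds in $A$. Hence the whole weight of the theorem lies in the ``if'' part: assuming (by Stage~1) that $A$ is orthoimplicative and satisfies \rminus{compm'}, I must show that $A$ is uniform, i.e.\ that $p - .q - x = x$ whenever $x \le p \le q$. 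Since $p - .q - x \le x$ is \rminus{compm'} itself, the obstacle --- and this is the main obstacle of the proof --- is the reverse inequality $x \le p - .q - x$.

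\emph{Stage 3.} I would settle this by contradiction. Suppose $r := p - .q - x < x$ for some $x \le p \le q$, and put $m := p \wedge .q - x$; the meet exists because a commutative wBCK-algebra is a meet semilattice, and $r = p - m$ by~(\ref{minus}). Since $A$ is orthoimplicative it obeys the Pierce law, so (\ref{wm0}) with $y := q$ gives $x \wedge .q - x = 0$, whence $x \wedge m = 0$ (as $m \le q - x$). By Stage~1 we may invoke \rminus{Ippo'}: applied to $x \le p \le q$ it gives $p - x \le q - x$, and since $p - x \le p$ as well, $p - x \le m$. If $p - x = m$, then $r = p - m = p - .p - x = x$ by \rminus{</-}, contradicting $r < x$; so $p - x < m$. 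Now set $d := m - .p - x$; applying \rminus{Ippo'} to $p - x \le m \le p$ gives $d \le p - .p - x = x$, while $d \le m$ by \rminus{xyx}, so $d \le x \wedge m = 0$, i.e.\ $m - .p - x = 0$; by \rminus{salg} this forces $m \le p - x$, contradicting $p - x < m$. Hence no counterexample exists, $x \le p - .q - x$ holds for $x \le p \le q$, and $A$ is uniform, thus implicative. The only step that genuinely uses orthoimplicativity rather than mere commutativity is the disjointness $x \wedge .q - x = 0$ supplied by the Pierce law; beyond that, and beyond keeping the interchangeable roles of \rminus{compm'} and \rminus{Ippo'} straight, everything is direct computation with the wBCK axioms.
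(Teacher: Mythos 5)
Your proposal is correct, and Stages 1--2 coincide with what the paper does: the paper likewise settles the equivalence of \rminus{Ippo'}--\rminus{abb'} by rereading items (ii) and (iii) of the proof of Lemma \ref{equi:uni} with the equalities weakened to inequalities, and disposes of the ``only if'' part by noting that an implicative algebra is orthoimplicative and that \rminus{compm'} is contained in \rminus{compm}. The divergence is in the ``if'' direction, which the paper does not prove here at all: it defers it to Proposition \ref{suf}, obtained in Section \ref{struct} from Theorem \ref{semiunif'} by a lattice-theoretic argument in the sectional-complementation language (ortholattice sections via Corollary \ref{ortiff}, De Morgan duality applied to $z:=(x^+_p)^+_q$, and then Theorem \ref{impiff1}(a)). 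Your Stage 3 replaces this with a direct computation inside the subtraction calculus: with $m:=p\wedge(q-x)$ you get $p-x\le m$ from \rminus{Ippo'} applied to $x\le p\le q$, and $m-(p-x)=0$ from \rminus{Ippo'} applied to $p-x\le m\le p$ together with $x\wedge(q-x)=0$ (the Pierce law via (\ref{wm0})), whence $m=p-x$ and $p-(q-x)=p-(p-x)=x$ by \rminus{</-} and (\ref{minus}); all steps check out, and the reductio wrapper is actually unnecessary since the argument is direct. In effect you have re-proved Theorem \ref{semiunif'} (which, by Corollary \ref{ortiff}(a) and Proposition \ref{wbck:char}, is the same statement as Proposition \ref{suf}) without joins, sectional orthocomplementations or Theorem \ref{impiff1}; what your route buys is a self-contained, more elementary proof of Theorem \ref{semiunif} usable already in Section 5, while the paper's detour buys the structural formulation of the same fact as a result of independent interest within its Section \ref{struct} machinery. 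Note only that your Stage 1 equivalences use \rminus{</-} and hence commutativity, which is legitimate here because orthoimplicative algebras are commutative (Theorem \ref{piercom}), but worth stating explicitly.
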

\begin{proof}
Evidently, \rminus{Ippo'} and \rminus{compm'} are equivalent by \rminus{sgalois}. Further, an inspection of items (ii) and (iii) in the proof of Lemma \ref{equi:uni} shows that the equivalence of \rminus{compm'}--\rminus{abb'} can be proved just in the same way (of course, omitting those portions of the arguments now needless).

We have already noticed that an implicative wBCK-algebra always is orthoimplicative. Further, \rminus{compm'} is included in \rminus{compm}. These remarks end the proof of the ``only if'' part of the theorem. Its "if" part is proved in the subsequent section: see Proposition \ref{suf}.
\end{proof}

Orthoimplicative wBCK-algebras form a proper subclass of implicative wBCK-algebras.

\begin{exam}    \label{oinoti}
Let $A$ be a bounded lattice with five atoms $a,b,c,d,e$ and five coatoms $f,g,h,i,j$ such that $f = e \vee d$, $g = c \vee e$, $h = b \vee d$, $i = a \vee c$, $j = a \vee b$, all other joins of incomparable pairs of elements being equal to 1. (This is the self-dual lattice presented in \cite[Example]{ChH2}.) The operation  $-$ with the table
\begin{center}
\small \(
\begin{array}{|c|@{\extracolsep{6pt}}cccccccccccc|}
\hline
\;-\-\, &  0 &  a &  b &  c & d & e & f & g & h & i & j & 1 \vphantom{)^{A^A}} \\
\hline
0 & 0 & 0 & 0 & 0 & 0 & 0 & 0 & 0 & 0 & 0 & 0 & 0\vphantom{)^{A^A}} \\
a & a & 0 & a & a & a & a & a & a & a & 0 & 0 & 0 \\
b & b & b & 0 & b & b & b & b & b & 0 & b & 0 & 0 \\
c & c & c & c & 0 & c & c & c & 0 & c & 0 & c & 0 \\
d & d & d & d & d & 0 & d & 0 & d & 0 & d & d & 0 \\
e & e & e & e & e & e & 0 & 0 & 0 & e & e & e & 0 \\
f & f & f & f & f & e & d & 0 & d & e & f & f & 0 \\
g & g & g & g & e & g & c & c & 0 & g & e & g & 0 \\
h & h & h & d & h & b & h & b & h & 0 & h & d & 0 \\
i & i & c & i & a & i & i & i & a & i & 0 & c & 0 \\
j & j & b & a & j & j & j & j & j & a & b & 0 & 0 \\
1 & 1 & f & g & h & i & j & a & b & c & d & e & 0 \\
\hline
\end{array} \)
\end{center}
turns it into an orthoimplicative wBCK-algebra (cf.\ Corollary \ref{ortiff}(a)) in which \rminus{Ippo'} fails: $a \le j \le 1$, but $j - a \nleq 1 - a$.
\end{exam}

A class of wBCK-algebras intermediate between orthoimplicative and implicative wBCK-algebras (called semi-implicative wBCK-algebras) will be shortly discussed in subsection \ref{impl:struct}.

Theorem \ref{comm:equ'} and Lemma \ref{equi:uni} provide an equational axiom system for implicative wBCK-algebras consisting of \rminus{mcommut}, \rminus{c2}, \rminus{pw} (or \rminus{x.0yx}) and \rminus{abb}. Another one, \rminus{pierce}, \rminus{mcommut}, \rminus{c2}  and \rminus{abb}, comes from Proposition \ref{oi:equ} and Theorem \ref{semiunif}. The next theorem provides us with a slightly more economic set of axioms.

\begin{theo}
An algebra $(A,-,0)$ is an implicative wBCK-algebra w.r.t. to the relation $\le$ defined by \textup{\rminus{salg}} if and only if it satisfies \textup{\rminus{xx0}, \rminus{pierce}, \rminus{mcommut}} and \textup{\rminus{abb}}.
 \end{theo}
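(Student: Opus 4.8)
The plan is to show that the four identities \rminus{xx0}, \rminus{pierce}, \rminus{mcommut}, \rminus{abb} are jointly equivalent to the axiom system \rminus{mcommut}, \rminus{c2}, \rminus{pw}, \rminus{abb} already established (via Theorem~\ref{comm:equ'} and Lemma~\ref{equi:uni}) to define implicative wBCK-algebras. Since \rminus{mcommut} and \rminus{abb} are common to both lists, the whole task reduces to two implications at the level of bare identities (no order-theoretic reasoning, since $\le$ is merely shorthand for $x-y=0$): first, that \rminus{xx0}, \rminus{pierce}, \rminus{mcommut}, \rminus{abb} imply \rminus{c2} and \rminus{pw}; and second, conversely, that the already-known axioms imply \rminus{xx0} and \rminus{pierce}. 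The second direction is essentially free: \rminus{pierce} and \rminus{xx0} are theorems of every wBCK-algebra (the former was shown equationally valid in Section~\ref{prelim}, the latter is \rminus{xx0} itself), so from the established characterization of implicative wBCK-algebras both follow immediately.

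For the forward direction I would first extract \rminus{pw} as a trivial consequence of \rminus{pierce}: instantiating \rminus{pierce} with the inner term $y-x$ in place of $y$ gives $x-\di(y-x)-x.-x$, but more directly \rminus{pw} is literally the instance of \rminus{pierce} obtained by reading $x-.x-y$ as the ``$y$''-slot, i.e.\ $x-\di(x-y)-x.-x$; since \rminus{pierce} has the shape $u-.v-u=u$, putting $u:=x$, $v:=x-y$ yields exactly \rminus{pw}. So \rminus{pw} needs no work at all. The real content is deriving \rminus{c2}, namely $z-y.-(z-\di y-.y-x)=0$. Here is where I expect the main obstacle. The natural route is to first recover enough of the wBCK machinery from the four identities to know we are in a wBCK-algebra — but that is circular unless handled carefully, since \rminus{c2} is precisely one of the axioms used in Theorem~\ref{comm:equ} to \emph{prove} we have a wBCK-algebra. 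The cleaner strategy is to bootstrap: show directly from \rminus{xx0}, \rminus{pierce}, \rminus{mcommut} that \rminus{x0x} holds (e.g.\ using the Section~\ref{prelim} computation $x-y.-\di y-.x-y=x-y$ together with \rminus{xx0} appropriately, or the argument in the proof of Theorem~\ref{comm:equ'} adapted to derive \rminus{x0x} from \rminus{pierce} and \rminus{xx0}), then \rminus{0x0}, then enough of Proposition~\ref{wBCK-prop} to run the proof of \rminus{c2} given in Theorem~\ref{comm:equ} (``$\rminus{c2}$ holds in it by virtue of \rminus{mdi1} and \rminus{manti}'').

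Concretely, the key steps in order: (1) derive \rminus{x0x} from \rminus{xx0}, \rminus{pierce}, \rminus{mcommut} — following the pattern in the proof of Theorem~\ref{comm:equ'}, where from \rminus{pw} one gets $y-\di y-y.-y=y$ and thence, via the other axioms, \rminus{x0x}, except now we use \rminus{pierce} and \rminus{xx0} directly, noting $x-.x-x=x-0=x$ once \rminus{x0x} is in hand; (2) with \rminus{xx0} and \rminus{x0x} established, invoke Corollary~\ref{wbck:eq} — we still need \rminus{mdi2}, \rminus{manti}, \rminus{0x0}, (\ref{leanti}), so (3) reproduce from \rminus{mcommut}, \rminus{xx0}, \rminus{x0x}, \rminus{pierce}, \rminus{abb} the derivations of \rminus{0x0}, \rminus{mdi2}, (\ref{leanti}), \rminus{manti} exactly as in the proof of Theorem~\ref{comm:equ}, which used only \rminus{xx0}, \rminus{x0x}, \rminus{mcommut}, \rminus{c2} — so we must first supply \rminus{c2}; (4) therefore the honest order is: get \rminus{x0x}, then note that \rminus{pierce} together with \rminus{mcommut} and \rminus{x0x} gives the contraction-type identities of Section~\ref{prelim}, then derive \rminus{manti} and \rminus{mdi1}/\rminus{mdi2} by a self-contained computation, and finally obtain \rminus{c2} as a corollary of \rminus{mdi1} and \rminus{manti}. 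Once \rminus{c2} and \rminus{pw} are available we are in the situation of Theorem~\ref{comm:equ'}, giving a commutative wBCK-algebra, and then \rminus{abb} upgrades it to implicative by the definition and Lemma~\ref{equi:uni}. The main obstacle is precisely keeping the bootstrap non-circular: isolating a subset of the four identities from which \rminus{x0x} and then \rminus{manti} provably follow \emph{without} first assuming the wBCK structure; once that is done the rest is routine and can be delegated to the earlier theorems.
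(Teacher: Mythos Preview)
Your plan is structurally the same as the paper's: both directions reduce to checking that an algebra satisfying \rminus{xx0}, \rminus{pierce}, \rminus{mcommut}, \rminus{abb} is a wBCK-algebra (after which commutativity and uniformity are immediate from \rminus{mcommut} and \rminus{abb} via Lemma~\ref{equi:uni}). Like you, the paper obtains \rminus{mdi1} by setting $y:=x$ in \rminus{abb}, then gets \rminus{mdi2} from \rminus{mcommut}, and invokes Corollary~\ref{wbck:eq}. The only substantive difference is how the antitonicity law \rminus{manti} is obtained: the paper does not compute it from scratch but observes that the four identities are (dually) exactly Abbott's axioms for orthoimplication algebras and cites \cite[Lemma~1]{Abb2} for \rminus{0x0}, \rminus{manti} and (\ref{leanti}).

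That is precisely where your proposal has a gap. You correctly isolate \rminus{manti} (equivalently \rminus{c2}) as ``the main obstacle'' and promise ``a self-contained computation'', but none is given, and the sketches you offer all loop back through \rminus{c2} or through wBCK facts that presuppose \rminus{manti}. The easy pieces do go through as you suspect: \rminus{x0x} follows instantly from \rminus{pierce} with $y:=x$ and \rminus{xx0}; then \rminus{0x0} from \rminus{pierce} with $x:=0$; (\ref{leanti}) from \rminus{mcommut} and \rminus{x0x}; and \rminus{pw} is literally an instance of \rminus{pierce}. But \rminus{manti} is genuinely nontrivial from these four identities alone, and to complete your argument you would have to reproduce Abbott's Lemma~1 (which uses \rminus{abb} in an essential way). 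One small slip to fix: in your ``second direction'' you say \rminus{pierce} ``was shown equationally valid in Section~\ref{prelim}'' for all wBCK-algebras --- it was not (only \rminus{pw} is universal); rather, \rminus{pierce} holds in implicative wBCK-algebras because they are orthoimplicative, as noted just before Definition~5.1.
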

\begin{proof}
Of course, the four listed identities are fulfilled in an implicative wBCK-algebra. On the other hand, the order dual of an algebra $(A,-,0)$ satisfying these identities is essentially an orthoimplication algebra in the sense of \cite{Abb2} (and conversely). By Lemma 1 of that paper, such an algebra $A$ satisfies \rminus{0x0}, \rminus{manti} and (\ref{leanti}). It satisfies also \rminus{mdi1} (put $y = x$ in \rminus{abb}) and, hence, \rminus{mdi2}. So, A is a wBCK-algebra (Corollary \ref{wbck:eq}), which is implicative by definition (see Lemma \ref{equi:uni}).
\end{proof}

Therefore, orthoimplication algebras of \cite{Abb2} (called also orthomodular  implication algebras with the compatibility condition in \cite{ChHK2}), may be identified with implicative wBCK*-algebras. It follows that the algebra $\Ov$ from Example \ref{om6} is an instance of an implicative wBCK-algebra that is not a BCK-algebra (see the note subsequent to Definition \ref{oi:defi}; cf.\ also \cite[Remark]{Abb2}).

\section{Some structure theorems for commutative wBCK-algebras} \label{struct} 

\subsection{Prelimnaries}   \label{struct1}

Theorem 2.3 in \cite{wbck2} discovers the structure of initial segments of wBCK-algebras. In particular, every such a segment of a wBCK-algebra is its subalgebra. Proposition \ref{wbck:char} below is a slightly improved  version of the theorem.

A unary operation $^+$ on a bounded poset is called a \emph{dual Galois complementation} (or just \emph{g*-complementation}) \cite{wbck2} if it satisfies the conditions
    \[ x^{++} \le x,  \quad \text{ if } x \le y, \text{ then } y^+ \le x^+, \quad 0^+ = 1 \]
(then $x^+ = 0$ iff $x = 1$). A poset with $0$ is said to be \emph{sectionally g*-complemented}, if every section $[0,p]$ in it is g*-complemented. Observe that every bounded poset admits the \emph{discrete g*-complementation} defined by
\[
\mbox{$x^+ = 0$ if $x = 1$, and $x^+ = 1$ otherwise}.
\]

\begin{prop}    \label{wbck:char}
Let $A$ be a poset with the least element $0$, a binary operation $-$ and, for every $p \in A$, a unary operation $^+_p$ on $[0,p]$. The following assertions are equivalent:
\begin{myenum}
\thitem{a}
$(A,-,0)$ is a weak BCK-algebra, and $x^+_p = p - x$ for every $p \in A$ and all $x \le p$.
\thitem{b}
Every operation $^+_p$ is a g*-complementation, and
\[
x - y = \min\{z^+_x\colo z \le x,y\} \text{ for all } x,y \in A .
\]
\end{myenum}
If the poset $A$ is a meet semilattice, then the latter condition reduces to
\[
x - y = (x \wedge y)^+_x\; \text{ for all } x,y \in A .
\]
\end{prop}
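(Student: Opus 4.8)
The plan is to prove the equivalence (a)$\Leftrightarrow$(b) by transferring already-known facts about wBCK-algebras from the earlier sections, and then to verify the semilattice simplification separately. For (a)$\Rightarrow$(b): assuming $(A,-,0)$ is a wBCK-algebra with $x^+_p = p - x$, I would check that each $^+_p$ is a g*-complementation on $[0,p]$ by translating the three defining conditions. Antitonicity of $^+_p$ on $[0,p]$ is exactly \rminus{manti} restricted to the section; $0^+_p = p - 0 = p$ is the top of $[0,p]$ by \rminus{x0x}; and $x^{++}_p \le x$ for $x \le p$ is $p - .p - x \le x$, which is the conjunction of \rminus{mdi2} and \rminus{mdi1} (so that $p-.p-x$ lies in $[0,p]$ and is below $x$). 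The formula $x - y = \min\{z^+_x \colo z \le x, y\}$ is precisely equation \eqref{Minus}, once one notes $z \le x$ forces $z \in [0,x]$ so $z^+_x = x - z$ is defined.

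For (b)$\Rightarrow$(a): given the g*-complementations $^+_p$ and the min-formula for $-$, I would recover the wBCK axioms. First, one should show the min in (b) is actually attained, so that $-$ is a genuine total operation; this follows because $x^+_x = 0$ (top goes to bottom, since $x$ is the top of $[0,x]$) and $0 \le x,y$, so $0^+_x = x$ is an upper candidate and in particular the set $\{z^+_x : z \le x,y\}$ is nonempty; to see a minimum exists, note that $z := x -.x-y$ computed via the formula itself... — here it is cleaner to argue by showing that $x - y$ so defined satisfies \rminus{mdi2}, \rminus{manti} and \rminus{x0x}, and invoke Proposition \ref{wbck:prop}. Concretely: \rminus{x0x} is $x - 0 = \min\{z^+_x : z \le x, 0\} = 0^+_x = x$; \rminus{manti} follows because enlarging $y$ enlarges the index set $\{z : z \le x,y\}$, hence can only decrease the min; and for \rminus{mdi2} one takes the witness $z \le x,y$ realizing the min and uses $x - y = z^+_x$ together with $z^{++}_x \le z \le y$ (antitonicity would then be needed to compare, so one shows $x -.x-y = x - z^+_x \le z^{++}_x \le z \le y$, using \rminus{manti} which has by then been established). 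Once the axioms hold, $x^+_p = p - x$ is immediate from the formula evaluated at $y = p$ with $x \le p$: the index set is $\{z : z \le x\} = [0,x]$, whose min image under $^+_x$ is... — more directly, $p - x = \min\{z^+_p : z \le x, p\} = \min\{z^+_p : z \le x\}$, and $x^+_p$ is a lower bound of this set by antitonicity of $^+_p$ while being itself a member ($z = x$), so it is the min; thus $p - x = x^+_p$.

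The semilattice reduction is the easiest part: if $A$ is a meet semilattice, then in the formula $x - y = \min\{z^+_x : z \le x, y\}$ the constraint $z \le x, y$ is equivalent to $z \le x \wedge y$, i.e. $z \in [0, x \wedge y]$, and by antitonicity of $^+_x$ the largest such $z$, namely $x \wedge y$, gives the smallest $z^+_x$; hence the min is $(x \wedge y)^+_x$. Conversely that displayed formula clearly forces the general min-formula in the presence of $\wedge$.

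The main obstacle I anticipate is the (b)$\Rightarrow$(a) direction, specifically verifying that the pointwise-defined family $\{^+_p\}$ glues into a binary operation satisfying the \emph{global} axioms \rminus{mdi2} and \rminus{manti} — the subtlety is that these axioms mix different sections (e.g. \rminus{manti} relates $z - x$ and $z - y$ but the min defining $z - x$ ranges over $[0,x]\cap[0,z]$-type sets), so one must be careful that the min is taken in the right section and actually exists. The cleanest route is the one sketched: establish \rminus{x0x} and \rminus{manti} directly from the monotonicity-of-index-set argument, then derive \rminus{mdi2} using the realizing witness and the already-proved \rminus{manti}, and finish via Proposition \ref{wbck:prop}; the rest is routine bookkeeping already done in \cite{wbck2}.
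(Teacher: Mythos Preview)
The paper does not give a proof of this proposition at all: it is stated as ``a slightly improved version'' of \cite[Theorem 2.3]{wbck2} and left unproved. So there is no paper-proof to compare against; your proposal stands on its own, and it is essentially correct. The route via Proposition~\ref{wbck:prop} (check \rminus{x0x}, \rminus{manti}, \rminus{mdi2}) is exactly the natural one, and your arguments for each are sound: \rminus{x0x} is the singleton $\{0^+_x\}$; \rminus{manti} is inclusion of index sets; and for \rminus{mdi2}, with $z_0$ the witness for $x-y = (z_0)^+_x$, taking $w=(z_0)^+_x$ in the formula for $x-(x-y)$ gives $x-(x-y)\le (z_0)^{++}_x\le z_0\le y$ --- note this step does \emph{not} actually need the already-established \rminus{manti}, only that $(z_0)^+_x$ lies in the index set.

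One clarification: your worry about whether the min in (b) is attained is misplaced. In the proposition's setup, $-$ is part of the given data on $A$, and condition (b) \emph{asserts} that $x-y$ equals that minimum; hence the minimum exists (it is $x-y$) as part of the hypothesis. You are not being asked to construct $-$ from the family $\{^+_p\}$ and prove totality, only to check that a $-$ satisfying the displayed formula obeys the wBCK axioms. Once you drop that digression, your (b)$\Rightarrow$(a) argument is clean. The recovery of $x^+_p=p-x$ from (b) and the semilattice reduction are both handled correctly.
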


When speaking on a wBCK-algebra as sectionally g*-complemented, or on a sectionally g*-complemented poset as a wBCK-algebra, we shall have in mind just the operations $^+_p$ defined in (a) and, respectively,  the subtraction described in (b). The equalities (\ref{minus}) and (\ref{Minus}) are easy consequences of this proposition. Notice that there is a bijective correspondence between semilattice ordered wBCK-algebras and sectionally g*-complemented semilattices. (This correspondence is functorial.) Generally, the transfer from wBCK-algebras to sectionally g*-complemented posets is injective, and, of course, different sectionally g*-complemented posets cannot support the same wBCK-algebra. Every sectionally g*-comple\-mented poset with discrete initial segments supports a discrete wBCK-algebra (and conversely). For all that, not every sectionally g*-complemented poset gives raise to a wBCK-algebra.

\begin{exam}
The five-elements poset consisting of four maximal chains $0 < a < c$, $0 < b < c$, $0 < a < d$ and $0 < b < d$ may be regarded as sectionally g*-complemented with $a^+_c = a^+_d = b$ and $b^+_c = b^+_d = a$. This allows us to define $x - y$ uniquely for all values of $x$ and $y$ except for $x = c, y = d$ and $x = d, y = c$. The reason for the exceptions is that neither $a^+_c$ and $b^+_c$ nor $a^+_d$ and $b^+_d$ are comparable.
\end{exam}
 
To proceed, we introduce several particular types of g*-complementation; some of them were discussed in \cite{wbck2}.
A unary operation $^+$ on a bounded poset is said to be
\begin{itemize}
\item
a \emph{semicomplementation} (\emph{s-complementation})
if it satisfies the conditions
\[
\mbox{$x \wedge x^+$ exists and equals $0$, and $x^+ = 0$ only if $x = 1$},
\]
\item
a \emph{dual Brouwerian complementation} (\emph{b*-complementation}) if it is a g*-complementation such that 
 \( x \vee x^+ \text{ exists and equals } 1\)
for all $x$,
 \item
a \emph{De Morgan complementation} (\emph{m-complementation}),
if it is an idempotent g*-complementation,
\item
an \emph{orthocomplementation} (\emph{o-complementation}),
if it is an idempotent b*-complementation; equivalently, if it is both an s- and an m-comple\-mentation. 
\end{itemize}

Let the symbol @ stand for any of symbols s, b*, m, o. A poset with $0$ is said to be \emph{sectionally @-complemented} if every section $[0,p]$ in it is equipped with a  @-complementation.

By Proposition \ref{wbck:char}, a wBCK-algebra $A$, being sectionally g*-complemented, is sectionally b*-complemented if and only if the join of $p - x$ and $x$ in $[0,p]$ exists and equals to $p$ whenever $x \le p$. If $p$ here always turns out to be the join of these elements even in $A$, we shall say that $A$ is \emph{strongly sectionally b*-complemented}. 
The subsequent proposition is essentially a rewording of \cite[Theorem 3.2]{wbck2}. We give it a short independent proof.

\begin{prop} \label{wposiff}
A wBCK-algebra satisfies the contraction rule \textup{\rminus{mcontr-}} if and only if it is strongly sectionally b*-complemented.
\end{prop}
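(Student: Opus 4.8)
The plan is to prove the two implications separately, using Proposition~\ref{wbck:char} to move freely between the algebraic description (subtraction, $\le$) and the sectional one (the operations $^+_p$ on $[0,p]$).

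First I would unpack what ``strongly sectionally b*-complemented'' means for a wBCK-algebra $A$ in terms of the subtraction. By Proposition~\ref{wbck:char}, each $^+_p$ is already a g*-complementation with $x^+_p=p-x$; the extra demand is that for every $p$ and every $x\le p$, the join $x\vee(p-x)$ exists \emph{in $A$} and equals $p$. Since $p$ is trivially an upper bound of $x$ and $p-x$ (by \rminus{xyx}), what must be shown is exactly that $p$ is the \emph{least} upper bound: whenever $x\le u$ and $p-x\le u$, then $p\le u$. So the statement to prove is: \rminus{mcontr-} holds in $A$ iff for all $p$ and all $x,u$ with $x\le p$, the conditions $x\le u$ and $p-x\le u$ force $p\le u$.

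For the forward direction, assume \rminus{mcontr-}, and suppose $x\le p$, $x\le u$, $p-x\le u$. I would look at $p-u$. Since $x\le u$ we get $p-u\le p-x\le u$ by \rminus{manti}; but $p-u\le p$ by \rminus{xyx}, so working inside the section $[0,p]$ (which is itself a wBCK-algebra by the remark before Proposition~\ref{wbck:char}), the element $q:=p-u\wedge p$... more cleanly: set $y:=p\wedge u$ if the meet exists, but we cannot assume it does, so instead I apply \rminus{mcontr-} directly. From $p-u\le u$ and $p-u\le p$ we want to conclude $p\le u$; the obstacle is that \rminus{mcontr-} as stated gives $p\le u$ from $p-u\le u$ \emph{only} — and indeed $p-u\le u$ is precisely what we derived (from $x\le u$ via $p-u\le p-x\le u$). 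Hence \rminus{mcontr-} yields $p\le u$, which is what we needed. So the forward direction reduces to the single chain $p-u\le p-x\le u$ followed by one application of the contraction rule; here the main point to get right is that $p-x\le u$ was used only to get $x\vee(p-x)$ bounded by $u$, while $x\le u$ is what actually drives the computation.

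For the converse, assume $A$ is strongly sectionally b*-complemented and suppose $x-y\le y$; I must show $x\le y$. Put $p:=x$ — wait, better to put $p:=x$ is wrong since we need $y\le p$. Instead consider the section $[0,x]$: we have $x-y\le x$ by \rminus{xyx}, and by (\ref{minus})-type reasoning $x-y = x-(x\wedge y)$ if the meet exists; to avoid meets I use (\ref{Minus}) or argue directly. The clean route: apply \rminus{mdi1} and \rminus{mdi2} to get $x-(x-y)\le$ a lower bound of $x$ and $y$; call the relevant element and use strong b*-completeness in $[0,x]$ applied to the pair $x-(x-y)$ and $x-(x-(x-y))=x-y$ (by \rminus{mtri}). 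Their join in $A$ is $x$ by hypothesis. But $x-y\le y$ and $x-(x-y)\le y$ (the latter by \rminus{mdi2}), so $y$ is an upper bound of both; hence $x\le y$. This is the slicker of the two directions. The main obstacle throughout is bookkeeping around the possible nonexistence of meets — one must phrase everything via $z\le x,y$ with $x-(x-y)$ playing the role of a canonical lower bound, exactly as in the derivation of (\ref{Minus}) — but no genuinely new idea beyond \rminus{mtri}, \rminus{mdi1}, \rminus{mdi2}, \rminus{manti} and the single use of \rminus{mcontr-} is required.
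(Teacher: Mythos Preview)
Your proposal is correct and follows essentially the same approach as the paper's proof. For the forward direction you derive $p-u\le p-x\le u$ from \rminus{manti} and then apply \rminus{mcontr-}, exactly as the paper does (with $z$ in place of your $u$); for the converse you use the strong b*-complementation in $[0,x]$ on the pair $x-(x-y)$ and $x-y$ (related by \rminus{mtri}), both bounded by $y$ via the hypothesis and \rminus{mdi2}, which is precisely the paper's argument with the two elements in the other order. The meandering about meets is unnecessary but harmless.
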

\begin{proof}
Assume that \rminus{mcontr-} is fulfilled in $A$, and suppose that $x \le p$. Clearly, then $p$ is an upper bound of $x$ and $p - x$. If $z$ is one more upper bound, then $p - z \le p - x \le z$ by \rminus{manti}, and, further, $p \le z$ by \rminus{mcontr-}. Therefore, $p$ is the least upper bound of $x$ and $p - x$ in $A$. Thus, $A$ is strongly sectionally b*-complemented.

Conversely, assume that $A$ is strongly sectionally b*-complemented.  Then $x = x - y. \vee x - .x - y \le y$ whenever $x - y \le y$: see \rminus{xyx} and \rminus{mdi2}.
So, \rminus{mcontr-} is valid.
\end{proof}

\subsection{Commutative and orthoimplicative wBCK-algebras}

We can say more about the structure of commutative wBCK-algebras. A meet semilattice in which every pair of elements bounded above has the join is known as a \emph{nearlattice}. It follows that every initial section in a nearlattice is a lattice. Notice that a sectionally b*-compemented wBCK-nearlattice is strongly sectionally b*-complemented. The subsequent proposition is a consequence of \cite[Lemmas 2.5 and 2.2]{wbck2}. 

\begin{prop}   \label{meetnear}
\begin{myenum}
\item A sectionally m-complemented meet semilattice is a nearlattice.
\item A sectionally m-complemented poset is a meet semilattice if and only if it is a wBCK-algebra.
\end{myenum}
\end{prop}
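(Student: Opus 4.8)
The plan is to treat the two parts separately, in each case reducing everything to the behaviour of the sectional complementations $^+_p$, which by our conventions coincide with the maps $x \mapsto p - x$ (Proposition \ref{wbck:char}(a)). Throughout I use that an m-complementation on a section is an order-reversing involution $^+_p$ with $0^+_p = p$; in subtraction terms its idempotency reads $p - .p - x = x$ whenever $x \le p$.

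For part (a), the first step is to observe that each section $[0,p]$ of the meet semilattice, carrying the order-reversing involution $^+_p$, is in fact a lattice. Indeed, for $x,y \le p$ the element $x^+_p \wedge y^+_p$ exists (it is the meet in the ambient semilattice, and lies in $[0,p]$), and I would verify that $j := (x^+_p \wedge y^+_p)^+_p$ is the join of $x$ and $y$ in $[0,p]$: from $x^+_p \wedge y^+_p \le x^+_p$ and antitonicity one gets $x = x^{++}_p \le j$, and symmetrically $y \le j$, while any common upper bound $z \le p$ satisfies $z^+_p \le x^+_p \wedge y^+_p$, whence $j \le z^{++}_p = z$. The second step promotes this local join to a global one: if $x,y \le p$ and $z$ is \emph{any} upper bound of $x,y$ in the whole semilattice, then $z \wedge p$ is an upper bound of $x,y$ lying in $[0,p]$, so $j \le z \wedge p \le z$. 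Hence $j$ is the join of $x$ and $y$ in $A$, and every pair bounded above has a join; that is precisely the nearlattice condition.

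For part (b), one direction is immediate: if a sectionally m-complemented $A$ is a meet semilattice, then every $^+_p$ is in particular a g*-complementation and, the semilattice being present, the subtraction of Proposition \ref{wbck:char}(b) reduces to $x - y = (x \wedge y)^+_x$, which is everywhere defined; so by that proposition $A$ is a wBCK-algebra. For the converse, suppose $A$ is a wBCK-algebra that is sectionally m-complemented. The crucial observation is that the idempotency of the complementations says exactly $p - .p - x = x$ for all $x \le p$; taking $y = p$, this is the ``only if'' half of condition \rminus{</-} of Theorem \ref{commiff} (its ``if'' half being automatic from \rminus{xyx}), so $A$ is commutative. But by the converse part of Corollary \ref{wed/-}, in any commutative wBCK-algebra the term $x - .x - y$ is the greatest lower bound of $x$ and $y$; hence $A$ is a meet semilattice.

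The main obstacle is conceptual rather than computational: recognizing that the sectional idempotency $x^{++}_p = x$ is nothing but the commutativity criterion \rminus{</-} in disguise, which is what lets Corollary \ref{wed/-} supply the meets in part (b). In part (a) the only point needing care is that the join manufactured inside a section is genuinely the join in the whole poset, which the meet $z \wedge p$ with an arbitrary upper bound $z$ resolves.
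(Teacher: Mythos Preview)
Your proof is correct. The paper itself does not give an argument here at all: it merely records that the proposition follows from Lemmas 2.5 and 2.2 of \cite{wbck2}. You instead supply a self-contained proof inside the present framework, which is a genuine difference in route.

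For part (a) your argument is the expected one: the antitone involution $^+_p$ turns the meets already present in $[0,p]$ into joins there via $j = (x^+_p \wedge y^+_p)^+_p$, and the step $z \mapsto z \wedge p$ pulls an arbitrary upper bound into the section, showing $j$ is the global join. For part (b) the key move you make---reading sectional idempotency $x^{++}_p = x$ as the criterion \rminus{</-} of Theorem \ref{commiff}, and then invoking Corollary \ref{wed/-} to manufacture meets---is exactly the link the paper exploits a few lines later in Theorem \ref{comiff}(a), so you are anticipating rather than bypassing the paper's own machinery. What your approach buys is independence from the external reference; what the citation buys is brevity. One cosmetic remark: the phrase ``taking $y = p$'' is slightly garbled (you mean: the idempotency statement for every $p$ and every $x \le p$ \emph{is} the ``only if'' half of \rminus{</-} with $y$ renamed $p$), but the content is fine.
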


 An m-complemented lattice is called a \emph{non-distributive De Morgan lattice}.

\begin{theo} \label{comiff}
\begin{myenum}
\item
A wBCK-algebra is commutative if and only if its sectional g*-complementations are idempotent, i.e., if it is sectionally m-complemented.
\item
A commutative wBCK-algebra is a nearlattice in which every section is a non-distributive De Morgan lattice (and conversely).
\end{myenum}
\end{theo}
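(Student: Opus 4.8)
The plan is to prove part (i) first, since part (ii) will follow by combining (i) with Proposition \ref{meetnear} and the characterization of commutativity already available. For part (i), recall from Proposition \ref{wbck:char} that in any wBCK-algebra the sectional operation is $x^+_p = p - x$ for $x \le p$, and it is automatically a g*-complementation. The idempotency of every $^+_p$ means exactly that $p - .p - x = x$ for all $x \le p$, i.e., $(p - (p-x)) = x$ whenever $x \le p$. But this is precisely condition \rminus{</-} of Theorem \ref{commiff} (its ``only if'' direction gives idempotency, the ``if'' direction recovers it), which that theorem shows is equivalent to commutativity. So part (i) is essentially a restatement of Theorem \ref{commiff}\rminus{</-} in the language of sectional complementations, and the proof amounts to citing that theorem together with Proposition \ref{wbck:char}.

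For part (ii), I would argue in both directions. For the forward direction: let $A$ be a commutative wBCK-algebra. By part (i) it is sectionally m-complemented, and a fortiori sectionally g*-complemented, hence by Proposition \ref{wbck:char} a meet semilattice is not yet guaranteed — but Proposition \ref{meetnear}(ii) says that a sectionally m-complemented poset is a meet semilattice if and only if it is a wBCK-algebra, and $A$ is a wBCK-algebra, so $A$ is a meet semilattice. Then Proposition \ref{meetnear}(i) upgrades this: a sectionally m-complemented meet semilattice is a nearlattice. Since in a nearlattice every initial section $[0,p]$ is a lattice, and that section carries the m-complementation $^+_p$, each section is an m-complemented lattice, i.e., a non-distributive De Morgan lattice by the definition given just before the theorem. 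That handles the forward direction.

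For the converse: suppose $A$ is a nearlattice in which every section is a non-distributive De Morgan lattice. Being a nearlattice with bottom $0$, it is a meet semilattice with least element, and each section $[0,p]$ being an m-complemented lattice means $^+_p$ is an m-complementation, in particular a g*-complementation. So $A$ is a sectionally g*-complemented meet semilattice; by Proposition \ref{wbck:char} (the semilattice clause) it becomes a wBCK-algebra under $x - y := (x \wedge y)^+_x$. Each $^+_p$ is idempotent, so by part (i) this wBCK-algebra is commutative. The only subtlety is making sure the subtraction recovered from the sectional complementations is the ``same'' operation one started with — but the statement is about the existence of a commutative wBCK-algebra structure whose sections are the given De Morgan lattices, and Proposition \ref{wbck:char} together with the remark following it (on the bijective correspondence between semilattice-ordered wBCK-algebras and sectionally g*-complemented semilattices) takes care of this bookkeeping.

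The main obstacle, such as it is, is not a deep one: it is keeping straight which of the three structural facts (Proposition \ref{wbck:char}, Proposition \ref{meetnear}, Theorem \ref{commiff}) is being invoked at each step, and in particular noticing that commutativity is needed to pass from ``sectionally g*-complemented meet semilattice'' to the semilattice/nearlattice conclusions only insofar as it yields idempotency — the meet-semilattice property itself comes from Proposition \ref{meetnear}(ii) once we know $A$ is a wBCK-algebra, and does not require commutativity a priori. I expect the whole proof to be three or four lines, essentially a chain of citations, with the nearlattice claim (every bounded-above pair has a join) being the one place where Proposition \ref{meetnear}(i) does genuine work rather than mere translation.
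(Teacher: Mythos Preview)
Your proposal is correct and follows essentially the same approach as the paper: part (a) is obtained by translating idempotency of $^+_p$ via Proposition \ref{wbck:char} into \rminus{midem}/\rminus{</-} and invoking Theorem \ref{commiff}, and part (b) is a chain of citations through Proposition \ref{meetnear}. The only cosmetic difference is that the paper obtains the meet-semilattice structure in the forward direction of (b) from Corollary \ref{wed/-} rather than from Proposition \ref{meetnear}(b), but this is the same argument in different packaging.
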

\begin{proof}
(a) Due to Proposition \ref{wbck:char}, the operations $^+_p$ in a wBCK-algebra are idempotent, i.e., are m-complemen\-tations, just in the case when \rminus{midem} holds.

(b) The mentioned properties of a commutative wBCK-algebra follow immediately from definitions, (a), Corollary \ref{wed/-} and Proposition \ref{meetnear}(a). Conversely, a nearlattice with non-distributive De Morgan sections is sectionally m-complemented, hence, a commutative wBCK-algebra by (a) and Proposition \ref{meetnear}(b).
\end{proof}

Order duals of such structures (without relating them to wBCK*-alge\-bras, of course) have been discussed already in \cite{Ch0} and the papers mentioned in Section \ref{commw} (pp.\ \pageref{rf1} and \pageref{rf2}). Cf.\ also Corollary 6.4 in \cite{wbck3}. 

It is now known well that a bounded commutative BCK-algebra can be equipped with the structure of MV-algebra; even more, classes of bounded BCK-algebras and MV-algebras are term-equivalent \cite{COM}. In \cite{ChKS}, a similar equivalence is stated for bounded implicative basic algebras (i.e., bounded commutative wBCK*-algebras, see Section \ref{commw}) and a version of non-asso\-ciative MV-algebras known as basic algebras \cite{ChHK2,ChHK3}.
Lattices studied in \cite{Ch3,ChE1} in connection with MV-algebras also are in fact bounded commutative wBCK*-algebras, while lattices from an earlier paper \cite{ChHK1} are essentially even (bounded commutative) BCK*-algebras.
The reader could derive from \cite[Theorem 2]{Ch3} necessary and sufficient conditions for sections of a bounded commutative wBCK-algebra to be distributive De Morgan lattices.

By definition, an \emph{ortholattice} is an o-complemented lattice.
\begin{coro} \label{ortiff}
\begin{myenum}
\item
A wBCK-algebra is orthoimplicative if and only if it is sectionally s-complemented.
\item
An orthoimplicative wBCK-algebra is a nearlattice in which every section is an ortholattice (and conversely).
\end{myenum}
\end{coro}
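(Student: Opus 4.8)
The plan is to reduce both assertions to material already established: the characterization of commutative wBCK-algebras through idempotent sectional complementations (Theorem~\ref{comiff}), the reformulation of the Pierce law via (\ref{wm0}) and \rminus{pierce'}, and the dictionary between sectionally complemented meet semilattices and wBCK-algebras (Propositions~\ref{wbck:char} and~\ref{meetnear}). Thus I treat (a) first and then read off (b) as an assembly of (a) with Theorem~\ref{comiff}(b).

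For the ``only if'' part of (a), I would start from an orthoimplicative $A$ and invoke the equivalence of the Pierce law with (\ref{wm0}); taking $y := p$ there gives $x \wedge (p - x) = x \wedge x^+_p = 0$ for every $x \le p$, and this meet, which exists in $A$, lies in $[0,p]$ and is the meet there. Since each $^+_p$ is already a g*-complementation (Proposition~\ref{wbck:char}), so that $x^+_p = 0$ forces $x = p$, this shows precisely that every $^+_p$ is an s-complementation, i.e.\ that $A$ is sectionally s-complemented. For the ``if'' part it suffices to verify \rminus{pierce'}, because a wBCK-algebra satisfying \rminus{pierce'} satisfies the Pierce law. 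So suppose $x \le y - x$; by \rminus{xyx} we get $x \le y - x \le y$, hence $x \in [0,y]$ with $x^+_y = y - x$, and the hypothesis says $x \le x^+_y$. The s-complementation property of $^+_y$ then yields $x \le x \wedge x^+_y = 0$, so $x = 0$, as required.

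With (a) in hand, part (b) is mostly synthesis. In the forward direction, an orthoimplicative $A$ is commutative, so Theorem~\ref{comiff}(b) makes it a nearlattice whose sections are (non-distributive De Morgan, hence) lattices, while part (a) makes each $^+_p$ also an s-complementation; being simultaneously an m- and an s-complementation, each $^+_p$ is an o-complementation, so every section is an ortholattice. Conversely, given a nearlattice $A$ in which every section is an ortholattice, each section is in particular m-complemented, so $A$ is a sectionally m-complemented meet semilattice and hence a wBCK-algebra by Proposition~\ref{meetnear}(b); its canonical sectional complementations coincide with the given o-complementations by Proposition~\ref{wbck:char}, these are s-complementations, and so part (a) delivers orthoimplicativity.

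The only delicate point I anticipate is the bookkeeping in the converse of (b): after recognizing the nearlattice as a wBCK-algebra, one must confirm that its intrinsic subtraction reproduces the given sectional complementations, so that ``sectionally s-complemented'' in the sense used in part (a) is legitimately applicable. This is exactly what Proposition~\ref{wbck:char} guarantees, since for $x \le p$ one has $p - x = (p \wedge x)^+_p = x^+_p$. Everything else is a routine unwinding of the definitions of s-, m-, and o-complementation.
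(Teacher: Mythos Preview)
Your proof is correct and follows essentially the same strategy as the paper for part~(a): both reduce the question to the equivalence of the Pierce law with condition~(\ref{wm0}), though the paper packages the two directions into a single biconditional chain (showing that sectional s-complementation amounts to $x \wedge (p - x) = 0$ for $x \le p$, and that this is equivalent via \rminus{xyx} and (\ref{minus}) to the unrestricted~(\ref{wm0})), while you split the argument and use \rminus{pierce'} for the ``if'' half.

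For part~(b) there is a minor but genuine difference in routing. The paper derives the ortholattice structure of sections by combining m-complementation (from Theorem~\ref{comiff}) with b*-complementation, obtained via Lemma~\ref{posimpl} (Pierce $\Rightarrow$ contraction rule) and Proposition~\ref{wposiff} (contraction rule $\Rightarrow$ strongly sectionally b*-complemented); it then uses the description of o-complementation as idempotent b*-complementation. You instead combine m-complementation with s-complementation, drawn directly from your part~(a), using the alternative description of o-complementation as simultaneous s- and m-complementation. Your route is arguably cleaner here since it reuses (a) rather than invoking two further results, while the paper's route makes explicit the useful fact that joins $x \vee x^+_p = p$ hold in the ambient algebra, not merely in the section.
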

\begin{proof}
(a) Let $A$ be a wBCK-algebra. By Proposition \ref{wbck:char}, it (being sectionally g*-complemented) is  sectionally s-complemented if and only if $x \wedge .p - x = 0$ whenever $x \le p$, i.e., if and only if $p \wedge x. \wedge \di p - .p \wedge x = 0$. By \rminus{xyx} and (\ref{minus}), the latter condition is equivalent to the identity $x \wedge. p - x = 0$, which is (\ref{wm0}), and, hence, to the Pierce law, as needed

(b) By Lemma \ref{piercom}, an orthoimplicative wBCK-algebra is commutative. Now use the preceding theorem, Proposition \ref{wposiff} and Lemma \ref{posimpl}.
\end{proof}

For related dual structures, see the papers mentioned on p.\ \pageref{rf3} in Section \ref{oimpl}. Nearlattices appearing in (b) have been called orthosemilattices in \cite{Ch2}.
 The above corollary has the following immediate consequence.

 \begin{coro}    \label{s-o}
 Every sectionally s-complemented wBCK-algebra is sectionally o-complemented (and conversely).
 \end{coro}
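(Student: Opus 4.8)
The plan is to read the statement directly off Corollary~\ref{ortiff}, since both implications are just a matter of unwinding the definitions of the complementations involved. For the forward direction I would start from a sectionally s-complemented wBCK-algebra $A$. By Corollary~\ref{ortiff}(a) it is then orthoimplicative, and Corollary~\ref{ortiff}(b) at once upgrades this to the statement that $A$ is a nearlattice every section of which is an ortholattice. It remains only to observe that ``every section $[0,p]$ is an ortholattice'' is precisely ``$A$ is sectionally o-complemented'': by the convention adopted after Proposition~\ref{wbck:char}, the complementation carried by each section of a wBCK-algebra is the operation $^+_p$, and since $A$ is orthoimplicative, hence commutative (Theorem~\ref{piercom}), hence sectionally m-complemented (Theorem~\ref{comiff}(a)), it is in fact a nearlattice, so each $[0,p]$ is indeed a lattice.

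For the converse I would merely invoke the definition: an o-complementation is in particular an s-complementation. Hence if every $^+_p$ is an o-complementation then every $^+_p$ is an s-complementation, i.e., $A$ is sectionally s-complemented; no wBCK machinery beyond the definitions is needed here.

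I expect no genuine obstacle. The only point requiring a moment's care is the bookkeeping between the abstract phrase ``sectionally @-complemented'' and the concrete family of operations $^+_p$, which is settled once and for all by the remark following Proposition~\ref{wbck:char}. If one prefers a route that avoids Corollary~\ref{ortiff}(b), the same conclusion can be reached directly: sectional s-complementedness gives orthoimplicativity by~\ref{ortiff}(a); orthoimplicativity gives commutativity (Theorem~\ref{piercom}) and hence sectional m-complementedness (Theorem~\ref{comiff}(a)); and a complementation that is simultaneously an s- and an m-complementation is an o-complementation by definition, so $A$ is sectionally o-complemented. This variant also makes the parenthetical ``and conversely'' come out as a perfectly symmetric triviality.
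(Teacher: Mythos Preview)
Your proposal is correct and matches the paper's approach: the paper gives no explicit proof at all, merely stating that Corollary~\ref{s-o} is an immediate consequence of Corollary~\ref{ortiff}. Your argument simply spells out this immediacy, and your alternative route (via Theorems~\ref{piercom} and~\ref{comiff}(a) and the definition of o-complementation) is an equally valid unpacking of the same dependencies.
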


 Sectionally s-complemented  distributive nearlattices have been studied in \cite{NR}. As shown in \cite{JoW}, a sectionally s-complemented poset is distributive if and only if it is 0-distributive in the sense explained in that paper. It follows that an orthoimplicative wBCK-algebra has Boolean sections (and, hence, is a(n implicative) BCK-algebra) if and only if it is 0-distributive.

\subsection{Implicative wBCK-algebras}\label{impl:struct}

It is known that implication algebras of \cite{Abb1a,Abb1b} (known also as Tarski algebras) coincide with implicative BCK*-algebras; see, e.g., \cite{Jie}. So, any implicative BCK-algebra is a nearlattice with Boolean sections. This result has the counterpart for weak BCK-algebras, with orthomodular lattices instead of Boolean ones. Recall that an orthomodular lattice is an ortholattice in which $y = x \vee .y \wedge x^+$ whenever $x \le y$. Equivalently \cite{B}, an ortholattice is orthomodular if $y = x$ in it whenever
$x \le y$ and $x^+ \wedge y = 0$.

\begin{theo} \label{impiff1}
\begin{myenum}
\item
A wBCK-algebra is implicative if and only if it is sectionally m-complemented and satisfies the condition
\begin{equation}    \label{ippo}
\mbox{if $x \le p \le q$, then $x^+_p = p \wedge x^+_q$}.
\end{equation}
\item
An implicative wBCK-algebra is a nearlattice in which every section is an orthomodular lattice.
\end{myenum}
\end{theo}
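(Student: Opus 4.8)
The plan is to deduce both parts from results already in hand --- the commutativity/sectional m-complementation equivalence of Theorem \ref{comiff}, the uniformity characterisation of Lemma \ref{equi:uni}, Corollary \ref{ortiff}, and the orthomodularity test recalled after \cite{B} --- the only genuinely new observation being that the sectional identity (\ref{ippo}) is merely the uniformity clause \rminus{Ippo`} in disguise.

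For part (a): by definition an implicative wBCK-algebra is a commutative wBCK-algebra satisfying the mutually equivalent conditions of Lemma \ref{equi:uni}, one of them being \rminus{Ippo`}. By Theorem \ref{comiff}(a), \emph{commutative} and \emph{sectionally m-complemented} coincide, and such an algebra is a nearlattice (Theorem \ref{comiff}(b)), so the meets occurring in (\ref{ippo}) --- meets of elements lying below a common upper bound --- do exist. Since $x^+_p = p - x$ on every section (Proposition \ref{wbck:char}) and \rminus{Ippo`} was observed to be equivalent to (\ref{Ippo}), the identity of (\ref{ippo}) is just (\ref{Ippo}) written through the operations $^+_p$. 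Hence, if $A$ is implicative it is sectionally m-complemented and satisfies (\ref{ippo}); conversely, if $A$ is a sectionally m-complemented wBCK-algebra it is commutative by Theorem \ref{comiff}(a), and (\ref{ippo}) makes it uniform, hence implicative.

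For part (b): an implicative wBCK-algebra is commutative and, since uniformity entails the contraction rule, also orthoimplicative (see Section \ref{uoimpl} and Lemma \ref{posimpl}). So by Theorem \ref{comiff}(b) and Corollary \ref{ortiff}(b) it is a nearlattice each of whose sections $[0,p]$ is an ortholattice with orthocomplementation $x \mapsto x^+_p = p - x$. To see $[0,p]$ is orthomodular I would use the test recalled after \cite{B}: an ortholattice is orthomodular iff $x = y$ holds in it whenever $x \le y$ and $x^+ \wedge y = 0$. So fix $x \le y$ in $[0,p]$ with $x^+_p \wedge y = 0$; applying (\ref{ippo}) to the chain $x \le y \le p$ gives $x^+_y = y \wedge x^+_p$, that is, $y - x = y \wedge (p - x) = 0$, whence $y \le x$ by \rminus{salg}, so $x = y$, as required. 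I foresee no real obstacle: the argument is a chain of citations together with this one-line verification, and the single point to watch --- legitimacy of the meets in (\ref{ippo}) --- is already secured by the nearlattice property. (Should the converse of (b) be wanted as well, it goes the same way: orthomodular sections are in particular ortho sections, yielding orthoimplicativity by Corollary \ref{ortiff}, while the usual description of the relative orthocomplement, $x^+_y = y \wedge x^+_p$ for $x \le y \le p$, inside an interval of an orthomodular lattice delivers (\ref{ippo}).)
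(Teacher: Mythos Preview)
Your proof is correct and follows essentially the same approach as the paper. For (a) the arguments are identical; for (b) the paper verifies orthomodularity via the defining identity $y = x \vee (y \wedge x^+_q)$ (obtained from $y = x \vee x^+_y$ and (\ref{ippo})), whereas you use the equivalent test $x \le y,\ x^+_p \wedge y = 0 \Rightarrow x = y$ --- a cosmetic difference, since both criteria are quoted in the paper just before the theorem.
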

\begin{proof}
(a)
By Theorems \ref{comiff}(a) and \ref{wbck:char}, condition  (\ref{Ippo}), and Lemma \ref{equi:uni}.

(b) An implicative wBCK-algebra is, in particular,  orthoimplicative. By Theorem \ref{ortiff}(b), it is a nearlattice with  initial sections ortholattices. Now suppose that $x \le y \le q$. Then $y = x \vee x^+_y$, and we, applying (\ref{ippo}), come to the equality $y = x \vee (y \wedge x^+_q)$. Consequently, the ortholattice $[0,q]$ is orthomodular.
\end{proof}

As to (a), notice that a sectionally m-complemented poset satisfying (\ref{ippo}) may actually be called \emph{relatively m-complemented}; so, a wBCK-algebra is implicative if and only if it is relatively m-complemented. In connection with (b), cf.\ Theorem 4 in \cite{ChE3} or Proposition in \cite{Ch2} and their proofs. Semi-orthomodular lattices of  \cite{Abb2} are order duals of  those sectionally orthomodular meet semilattices that satisfy (\ref{ippo}); so, they can be characterized also as implicative wBCK*-algebras (see the connection between wBCK-algebras, nearlattices and  sectionally m-complemented posets stated in Proposition \ref{meetnear}). Further, a generalized orthomodular lattice \cite{Jan,B} can be defined as a sectionally orthomodular lattice satisfying (\ref{ippo}); see \cite{wbck3} for more details. We conclude that a wBCK-lattice is implicative if and only if it is a generalized orthomodular lattice.

It should be noted that the converse of (b) does not hold true: not every sectionally orthomodular nearlattice (alias an orthomodular semilattice \cite{Ch2}) satisfies  (\ref{ippo}): see \cite[Remark]{Ch2}. Correspondingly, not every sectionally orthomodular wBCK-algebra is implicative. Let us call such  wBCK-algebras \emph{semi-implicative}. It follows from Theorems 3--5 of \cite{ChHL1} (and Proposition \ref{meetnear} above) that a wBCK-algebra is semi-implicative if and only if it is the order dual of an orthomodular implication algebra in the sense of \cite{ChHL1,ChHL2,ChL}.
 
 Therefore, every  semi-implicative wBCK-algebra is orthoimplicative, and every implicative wBCK-algebra is  semi-implicative. Of course, the orthoimplicative wBCK-algebra from Example \ref{oinoti} is not semi-implicative; cf.\ \cite[Section 4]{ChHL2}. In its turn, Theorem 4.2 in \cite{MP} shows an orthomodular implication algebra that is not an orthoimplication algebra in the sense of \cite{Abb2}. In terms of the present paper, it is a semi-implicative wBCK*-algebra which is not implicative  (cf.\ the note at the end of Section \ref{uoimpl}).

In some situations, however, the requirement (\ref{ippo}) in Theorem \ref{impiff1}(a) can be weakened.

\begin{theo} \label{semiunif'}
A wBCK-algebra is implicative if and only if it is sectionally s-complemented and satisfies the condition
\begin{equation}    \label{ippo'}
\mbox{if $x \le p \le q$, then $x^+_p  \le x^+_q$},
\end{equation}
\end{theo}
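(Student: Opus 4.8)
The plan is to obtain Theorem~\ref{semiunif'} directly from Corollary~\ref{ortiff}(a) and Theorem~\ref{semiunif}, once the hypothesis is read in the language of sectional complementations. Recall from Proposition~\ref{wbck:char}(a) that $x^+_p = p - x$ whenever $x \le p$; hence the condition~(\ref{ippo'}) is literally the condition \rminus{Ippo'}, namely ``if $x \le p \le q$, then $p - x \le q - x$'', rephrased in terms of the operations $^+_p$. So it suffices to check that ``sectionally s-complemented and satisfies~(\ref{ippo'})'' coincides with ``orthoimplicative and satisfies \rminus{Ippo'}''.

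For the ``only if'' direction I would argue as follows. An implicative wBCK-algebra is orthoimplicative (this is recalled in the proof of Theorem~\ref{semiunif}), hence sectionally s-complemented by Corollary~\ref{ortiff}(a). Moreover, by Theorem~\ref{semiunif} an implicative wBCK-algebra satisfies \rminus{Ippo'}, which, as observed above, is exactly~(\ref{ippo'}).

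For the ``if'' direction, suppose $A$ is sectionally s-complemented and satisfies~(\ref{ippo'}). By Corollary~\ref{ortiff}(a), $A$ is orthoimplicative; and~(\ref{ippo'}) is \rminus{Ippo'}, so Theorem~\ref{semiunif} gives that $A$ is implicative. This closes the argument.

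The only point that could look like an obstacle is ensuring that the hypotheses of the cited results line up, and in particular that the weaker assumption ``sectionally s-complemented'' (in place of the ``sectionally m-complemented'' of Theorem~\ref{impiff1}(a)) is still sufficient. But this is precisely what Corollary~\ref{s-o} provides: a sectionally s-complemented wBCK-algebra is automatically sectionally o-complemented, hence sectionally m-complemented. So no genuine difficulty arises, and the content of the theorem is simply that the extra strength of s-complementation over m-complementation allows one to replace the condition~(\ref{ippo}) of Theorem~\ref{impiff1}(a) by its ``lower half''~(\ref{ippo'}), a fact already packaged in Theorem~\ref{semiunif}.
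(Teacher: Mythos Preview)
Your argument is circular. In the paper's logical structure, the ``if'' direction of Theorem~\ref{semiunif} is \emph{not} proved when that theorem is stated: the proof there explicitly defers it to Proposition~\ref{suf}, and Proposition~\ref{suf} is obtained \emph{as a corollary of} Theorem~\ref{semiunif'}. So when you write ``Theorem~\ref{semiunif} gives that $A$ is implicative'', you are invoking precisely the implication (orthoimplicative $+$ \rminus{Ippo'} $\Rightarrow$ implicative) whose only proof in the paper rests on the very theorem you are trying to establish. Your ``only if'' direction is fine, since it uses only the half of Theorem~\ref{semiunif} that is proved on the spot; the problem is entirely in the ``if'' direction.

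What the paper actually does for the ``if'' direction is a genuine lattice-theoretic argument that you have bypassed: from sectional s-complementation one gets (via Corollary~\ref{ortiff} and Corollary~\ref{s-o}) that every section $[0,q]$ is an ortholattice, so De Morgan duality is available there. Given $x \le p \le q$, one sets $z := (x^+_p)^+_q$, shows $x \le z \le q$, uses~(\ref{ippo'}) together with $z = x \vee x^+_z$ to obtain $z = x \vee (z \wedge x^+_q)$, and then dualizes inside $[0,q]$ to conclude $x^+_p = z^+_q = x^+_q \wedge (x^+_p \vee x) = x^+_q \wedge p$. This is the full condition~(\ref{ippo}), and now Theorem~\ref{impiff1}(a) applies. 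That orthomodular-style computation is the actual content of the theorem; your proposal assumes it away.
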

\begin{proof}
Let $A$ be some wBCK-algebra. If it is implicative, then, being orthoimplicative, it is sectionally s-complemented by Corollary \ref{ortiff}(a). Moreover, \rminus{Ippo'} implies (\ref{ippo'}).

Now assume that A is sectionally s-complemented  and satisfies (\ref{ippo'}).
Then it is a sectionally orthocomplemented nearlattice (Corollary \ref{ortiff}), 
and we may use De Morgan duality laws in every section. Suppose that  $x \le p \le q$. Then $x^+_p \le p$ and
$x = (x^+_p)^+_p \le (x^+_p)^+_q$. Put $z := (x^+_p)^+_q$; clearly, $x \le z \le q$. As $x^+_z \le z$ and $x^+_z \le x^+_q$, further
$z = x \vee x^+_z \le x \vee .z \wedge x^+_q \le z$, wherefrom  $z = x \vee .z \wedge x^+_q$.  Now
$x^+_p = z^+_q = x^+_q \wedge .z^+_q \vee x = x^+_q \wedge .x^+_p \vee x = x^+_q \wedge p$, i.e., (\ref{ippo}) also is fulfilled. By the previous theorem, $A$ is implicative.
\end{proof}

By Proposition \ref{wbck:char}, the conditions (\ref{ippo'}) and \rminus{Ippo'} are equivalent. Then, in view of Corollary \ref{ortiff}, the above theorem implies, in particular, the following result, which completes the proof of Theorem \ref{semiunif}.

\begin{prop}    \label{suf}
An orthoimplicative wBCK-algebra satisfying \textup{\rminus{Ippo'}} is implicative.
\end{prop}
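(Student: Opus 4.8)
The plan is to read off Proposition \ref{suf} directly from Theorem \ref{semiunif'}, so that essentially no new computation is needed. Theorem \ref{semiunif'} asserts that a wBCK-algebra is implicative precisely when it is sectionally s-complemented and satisfies (\ref{ippo'}); hence it suffices to check that an orthoimplicative wBCK-algebra $A$ obeying \rminus{Ippo'} meets both of these requirements.

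For the first requirement, I would simply invoke Corollary \ref{ortiff}(a): an orthoimplicative wBCK-algebra is sectionally s-complemented. For the second, I would pass from the subtraction form of \rminus{Ippo'} to its sectional form (\ref{ippo'}) via Proposition \ref{wbck:char}: since $x^+_p = p - x$ whenever $x \le p$, the implication ``$x \le p \le q \Rightarrow x^+_p \le x^+_q$'' of (\ref{ippo'}) is literally the implication ``$x \le p \le q \Rightarrow p - x \le q - x$'' of \rminus{Ippo'}. Thus both hypotheses of Theorem \ref{semiunif'} are fulfilled by $A$, and that theorem yields at once that $A$ is implicative.

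The one point that needs care is the risk of a circular argument, since Proposition \ref{suf} is advertised as completing the proof of the ``if'' part of Theorem \ref{semiunif}, which is exactly the implication we are after. To be safe I would trace the dependencies of Theorem \ref{semiunif'}: its proof rests on Corollary \ref{ortiff}, De Morgan duality in the sections, and Theorem \ref{impiff1}, and the latter in turn uses only Theorems \ref{comiff}(a) and \ref{wbck:char}, condition (\ref{Ippo}), and Lemma \ref{equi:uni}. None of these invokes Proposition \ref{suf} or the ``if'' direction of Theorem \ref{semiunif}, so the reasoning is sound; this verification is the only ``obstacle'' worth mentioning.
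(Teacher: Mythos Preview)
Your proposal is correct and follows essentially the same route as the paper: the paper also derives Proposition~\ref{suf} from Theorem~\ref{semiunif'} by noting (via Proposition~\ref{wbck:char}) that \rminus{Ippo'} and (\ref{ippo'}) coincide, and (via Corollary~\ref{ortiff}) that orthoimplicative wBCK-algebras are sectionally s-complemented. Your explicit circularity check is a welcome addition; the paper leaves this implicit in the ordering of the results.
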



\begin{thebibliography}{22}
\bibitem{Abb1a}
J.~C.~Abbott,
\textit{Semi-Boolean algebra},
Mat.\ Vesn., N.\ Ser.\ \textbf{19} (1967), 177--198.

\bibitem{Abb1b}
J.~C.~Abbott,
\textit{Implicational algebras},
Bull.\ Math.\ Soc.\ Sci.\ Math.\ R.S.\ Roumanie \textbf{11} (1967), 3--23.

\bibitem{Abb2}
J.~C.~Abbott,
\textit{Orthoimplication algebras},
Studia Logica \textbf{35} (1976), 173--177.

\bibitem{B}
L.~Beran,
\textit{Orthomodular Lattices. Algebraic Approach},
D.~Reidel Publ.\ Co., Dordrecht e.a., Academia, Prague, 1985.

\bibitem{BR}
W.J.~Blok, J.G.~Raftery,
\textit{Varieties of commutative residuated integral pomonoids and their residuation subreducts},
J.\ Algebra \textbf{190} (1997), 280--328.

\bibitem{Ch0}
{\sc Chajda~I},
`Lattices and semilattices having an antitone involution in every upper interval',
\textit{Commentat.\ Math.\ Univ.\ Carol.} 44:577-585, 2003.

\bibitem{Ch3}
I.~Chajda,
\textit{Distributivity of bounded lattices with sectionally antitone involutions and properties of MV-algebras},
Discuss.\ Math., Gen.\ Algebra Appl.\ \textbf{25} (2005), 155--163.

\bibitem{Ch4}
I.~Chajda,
\textit{Implication algebras},
Discuss.\ Math., Gen.\ Algebra Appl.\ \textbf{26} (2006), 141--153.

\bibitem{Ch2}
I.~Chajda,
\textit{Orthomodular semilattices},
Discrete Math.\ \textbf{307} (2007), 115--118.

\bibitem{Ch5}
I.~Chajda,
\textit{The axioms for implication in orthologic},
Czech.\ Math.\ J.\ \textbf{58(133)} (2008), 15--21.

\bibitem{ChE1}
I.~Chajda, P.~Emanovsk\'y,
\textit{Bounded lattices with antitone involutions and properties of MV-algebras},
Discuss.\ Math., Gen.\ Algebra Appl.\ \textbf{24} (2004), 31--42.

\bibitem{ChE3}
I.~Chajda, P.~Emanovsk\'y,
\textit{Sectional switching mappings in semilattices},
Miskolc Math.\ Notes \textbf{7} (2006), 113--120.

\bibitem{ChH1}
I.~Chajda, R.~Hala\v{s},
\textit{Abbott groupoids},
J.\ Mult.-Val.\ Log.\ Soft Comput., \textbf{10} (2004), 385--394.

\bibitem{ChH2}
I.~Chajda, R.~Hala\v{s},
\textit{An implication in orthologic},
Int.\ J.\ Theor.\ Phys.\ \textbf{44} (2005), 30--47.

\bibitem{ChH3}
I.~Chajda, R.~Hala\v{s},
\textit{Distributive implication groupoids},
Centr.\ Europenian J.\ Math.\ \textbf{5} (2007), 484-492.

\bibitem{ChHK1}
I.~ Chajda, R.~Hala\v{s}, J.~K\"uhr,
\textit{Distributive lattices with sectionally antitone involutions},
Acta Sci.\ Math.\ (Szeged) \textbf{71}, (2005), 19--31.

\bibitem{ChHK2}
I.~ Chajda, R.~Hala\v{s}, J.~K\"uhr,
\textit{Semilattice Structures},
Heldermann Verlag, Lemgo, 2007.

\bibitem{ChHK3}
I.~ Chajda, R.~Hala\v{s}, J.~K\"uhr,
\textit{Many-valued quantum algebras},
Alg.\ Univ.\ \textbf{60} (2009), 63--90.

\bibitem{ChHL1}
I.~Chajda, R.~Hala\v{s}, H.~L\"anger,
\textit{Orthomodular implication algebras},
Int.\ J.\ Theor.\ Phys.\ \textbf{40} (2001), 1875--1884.

\bibitem{ChHL2}
I.~Chajda, R.~Hala\v{s}, H.~L\"anger,
\textit{Simple axioms for orthomodular implication algebras},
Int.\ J.\ Theor.\ Phys.\ \textbf{43} (2004), 911--914.

\bibitem{ChKS}
I.~Chajda, M.~Kola{\v r}\'ik, F.~\v{S}vr\v{c}ek,
\textit{Implication and equivalential reducts of basic algebras},
Acta Univ.\ Palacki.\ Olomouc., Fac.\ rer.\ nat., Mathematica \textbf{49} (2010), 21--36.

\bibitem{ChL}
I.~Chajda, H.~L\"anger,
\textit{Generalizations of implication algebra},
Sitzungsber., Abt.\ II, \"Osterr.\ Akad.\ Wiss., Math.-Naturwiss.\ Kl.\  216 (2007), 33-43.

\bibitem{COM}
R.L.O.~Cignoli, I.M.L.~Ottaviano, D.~Mundici,
{Algebraic Foundations of Many-Valued Reasoning}.
Kluwer Acad.\ Publ., Dordrecht e.a., 2000.

\bibitem{wbck1}
J.~C\={\i}rulis,
\textit{Implication in sectionally pseudocomplemented posets},
Acta Sci.\ Math.\ (Szeged) \textbf{74} (2008), 477--491.

\bibitem{amst}
{\sc C\={\i}rulis~J.},
\textit{Implications in sectionally galois-complemented posets},
A talk at the conference
\textit{Topology, Algebra and Categories in Logic}
(Amsterdam, July 7-11, 2009) \\
\verb"http://www.illc.uva.nl/tacl09/Contributed_Talks.html"

\bibitem{wbck2}
J.~C\={\i}rulis,
\textit{Subtraction-like operations in nearsemilattices},
Demonstr.\ Math.\ \textbf{43} (2010), 725--738.

\bibitem{qbck}
J.~C\={\i}rulis,
\textit{Residuation subreducts of pocrigs},
Bull.\ Sect.\ Logic \textbf{39} (2010), 11--16.

\bibitem{wbck3}
J.~C\={\i}rulis,
\textit{Quasi-orthomodular posets and weak BCK-algebras},
Order \textbf{31} (2014), 403--419; doi:10.1007/s11083-013-9309-1 .

\bibitem{stlog}
J.~C\={\i}rulis,
\textit{On some classes of commutative weak BCK-algebras}, 
Sudia Logica (online), doi:10.1007/s11225-014-9575-y .

\bibitem{bck1}
K.~Iseki,
\textit{Topics  of BCK-algebras},
Proc. 1st Symp. Semigroups, Shimane Univ.\ 1977, 44-56

\bibitem{bck2}
K.~Iseki, Sh.~Tanaka,
\textit{An introduction to the theory of BCK-algebras},
Math.\ Japonica \textbf{23} (1978), 1--26.

\bibitem{Jan}
M.F.~Janowitz,
\textit{A note on generalized orthomodular lattices},
J.\ Natur.\ Sci.\ Math.\ \textbf{8} (1968), 89--94.

\bibitem{JoW}
V.~Joshi, B.N.~Waphare, 
\textit{Characterizations of 0-distributive posets},
Math.\ Bohem.\ \textbf{130}  (2005), 73-80.

\bibitem{MP}
N.D.~Megill, M.~Pavi\v{c}i\v{c},
\textit{Quantum implication algebras},
Int.\ J.\ Theor.\ Phys.\ \textbf{42} (2003), 2008--2022.

\bibitem{Jie}
J.~Meng,
\textit{Implication Algebras are dual to implicative BCK-algebras},
Sochow J.\ Math.\ \textbf{22} (1996), 567--571.

\bibitem{NR}
A.S.A.~Noor, MD.Bazlar~Rahman, 
\textit{Sectionally semicomplemented distributive nearlattices},
Southeast Asian Bull.\ Math.\ \textbf{26}  (2002), 603-609

\bibitem{PW}
M.~Pa{\l}asi\'nski, B.~Wo\'zniakowska,
\textit{An equational basis for commutative BCK-algebras},
Bull.\ Sect.\ Logic ({\L}\'od\'z) \textbf{10} (1981), 108--111.

\bibitem{Yut}
H.~Yutani,
\textit{The class of commutative BCK-algebras is equationally definable,},
Math.\ Seminar Notes Kobe Univ.\ \textbf{5} (1977), 207--210.

\bibitem{Wr}
A.~Wro\'nski,
\textit{BCK-algebras do not form a variety},
Math.\ Japon.\ \textbf{28} (1983), 211--213.

\end{thebibliography}
\end{document}